\documentclass[a4paper,10pt]{article}
\usepackage{amssymb,amsmath,amsthm}
\usepackage{fullpage}
\usepackage{hyperref}
\usepackage{eucal}
\usepackage{color}
\usepackage{stackrel}
\usepackage{graphicx}
\usepackage{mathtools}
\providecommand{\keywords}[1]
{
\small
  \textbf{\textit{Keywords:}} #1
}

\newcommand{\Real}{\mathbb{R}}
\newcommand{\C}{\mathbb{C}}

\newcommand{\sgn}{\mathop{\mathrm{sgn}}\nolimits}

\newcommand{\Dom}{\mathsf{D}}

\newcommand{\sii}{L^2}

\begingroup
    \makeatletter
    \@for\theoremstyle:=definition,remark,plain\do{%
        \expandafter\g@addto@macro\csname th@\theoremstyle\endcsname{%
            \addtolength\thm@preskip\parskip
            }%
        }
\endgroup
\newtheorem{Theorem}{Theorem}
\newtheorem{Lemma}{Lemma}
\newtheorem{Proposition}{Proposition}
\newtheorem{Corollary}{Corollary}

\theoremstyle{definition}
\newtheorem{Remark}{Remark}

%
%
\def\OMIT#1{}
%
%
\usepackage[normalem]{ulem}
\definecolor{DarkGreen}{rgb}{0,0.5,0.1} 

\newcommand\soutD{\bgroup\markoverwith
{\textcolor{DarkGreen}{\rule[.5ex]{2pt}{1pt}}}\ULon}
\newcommand\soutP{\bgroup\markoverwith
{\textcolor{blue}{\rule[.5ex]{2pt}{1pt}}}\ULon}
\newcommand{\Hm}[1]{\leavevmode{\marginpar{\tiny%
$\hbox to 0mm{\hspace*{-0.5mm}$\leftarrow$\hss}%
\vcenter{\vrule depth 0.1mm height 0.1mm width \the\marginparwidth}%
\hbox to
0mm{\hss$\rightarrow$\hspace*{-0.5mm}}$\\\relax\raggedright #1}}}

\begin{document}
%

\title{\textbf{\LARGE
On the 
regularity of  Dirac eigenfunctions 
}}
\author{Tuyen Vu}
\date{\small 
\begin{quote}
\begin{center}
Department of Mathematics, Faculty of Nuclear Sciences and 
Physical Engineering, \\ Czech Technical University in Prague, 
Trojanova 13, 12000 Prague 2, Czechia; \\
E-mail: thibichtuyen.vu@fjfi.cvut.cz.
\end{center}
\end{quote}
 April 2024
}
\maketitle
\vspace{-5ex} 
\begin{abstract}
\noindent
The article provides  proofs for the regularity of Dirac eigenfunctions, subject to MIT boundary conditions employed on various types of open sets ranging from smooth  ones to convex polygons in two dimensions, as well as on half-space and  smooth bounded domains in three-dimensional space. 
\end{abstract}
\keywords{Dirac eigenfunctions, regularity, MIT boundary conditions.}

%

\section{Motivation}
%
One of the most important topics in partial differential equations is 
the study of regularity of solutions.
We begin by recalling
the elliptic equation employed
on a bounded open set $\Omega$, 
namely the inhomogeneous Dirichlet Laplacian: 
%
\begin{equation}\label{Dirichlet0} 
\left\{
\begin{aligned}
  -\Delta \psi &= f
  && \mbox{in} && \Omega 
  \,,
  \\
  \psi &= g 
  && \mbox{on} && \partial\Omega 
  \,.
\end{aligned}
\right.
\end{equation}
If $\Omega$ is of class $C^{1,1}$, 
$f \in L^2(\Omega)$ and $g \in H^\frac{3}{2}(\partial\Omega)$,
then there exists a unique solution $\psi$ in $H^2(\Omega)$ 
of problem \eqref{Dirichlet0}. 
The regularity requirements
can be reduced when altering the boundary conditions into Neumann boundary: 

\begin{equation}\label{Neumann} 
\left\{
\begin{aligned}
  -\Delta \psi + a_0 \psi &= f_1
  && \mbox{in} && \Omega 
  \,,
  \\
  \frac{\partial\psi}{\partial n} &= g_1 
  && \mbox{on} && \partial\Omega 
  \,,
\end{aligned}
\right.
\end{equation}
where~$n$ is the outward unit of~$\Omega$.
In this case,
$f_1 \in \sii(\Omega), g_1 \in H^\frac{1}{2}(\partial\Omega)$ 
and $ a_0 \in L^\infty(\Omega), a_0 \geq \beta > 0$ a.e. in $\overline{\Omega}$
imply that there also exits a unique solution $\psi$
in  $H^2(\Omega)$ 
of problem \eqref{Neumann}.
Under the same conditions for $f_1, g_1$, the Robin Laplacian produces a unique solution in $ H^2(\Omega)$ of the 
boundary value problem
\begin{equation}\label{Robin} 
\left\{
\begin{aligned}
  -\Delta \psi &= f_1
  && \mbox{in} && \Omega 
  \,,
  \\
\frac{\partial\psi}{\partial n} + \alpha  \psi &= g_1 
  && \mbox{on} && \partial\Omega 
  \,,
\end{aligned}
\right.
\end{equation}
where $\alpha$ is a positive constant. 
All these mentioned results can be found in \cite{Grisvard}. 
The regularity conclusions play an important role in proving the self-adjointness of the Dirichlet, Neumann and Robin Laplace operators.

For unbounded domains, 
we also have the analogous results as the following theorems:

\begin{Theorem}\textbf{(regularity for the Dirichlet problem)}\label{D2}
Let $\Omega$ be an open set of
class $C^2$ with $\partial\Omega$ be bounded (or else $\Omega = \mathbb{R}^N_+)$. Let $f \in L^2(\Omega)$ and let $u \in H^1_0(\Omega)$
satisfy 
\begin{equation}
\int_\Omega \nabla u \nabla \phi + \int_\Omega u \phi = \int_\Omega f \phi \quad \quad \forall \, \phi \in H^1_0(\Omega).
\end{equation}

Then $u \in H^2(\Omega)$ and 
$\|u\|_{H^2} \leq C \|f\|_{L^2(\Omega)}$
, where $C$ is a constant depending only on
$\Omega$. Furthermore, if $\Omega$ is of class $C^{m+2}$ and $f \in H^m(\Omega)$, then\\
\begin{center}
$u \in H^{m+2}(\Omega)$ and 
$\|u\|_{
H^{m+2}} \leq C
\|f\|_{H^m}.$
\end{center}
In particular, if $f \in H^m(\Omega)$ with $m > \frac{N}{2}$, then $u \in C^2(\overline{\Omega})$. Finally, if $\Omega$ is of class
$C^\infty$ and if $f \in C^\infty(\overline{\Omega})$, then $u \in C^\infty(\overline{\Omega})$.

\end{Theorem}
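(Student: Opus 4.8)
\emph{Overall strategy and interior regularity.} The plan is to run the classical Nirenberg difference-quotient method, separating interior from boundary regularity and then patching with a finite partition of unity (finite because $\partial\Omega$ is bounded, and empty in the interior step when $\Omega=\mathbb{R}^N_+$). First I would fix $x_0\in\Omega$, a ball $B=B(x_0,r)\Subset\Omega$ and a cutoff $\zeta\in C_c^\infty(B)$ with $\zeta\equiv1$ on $B(x_0,r/2)$. For small $h$ and $k\in\{1,\dots,N\}$ put $D_h^k v(x)=h^{-1}(v(x+he_k)-v(x))$ and test the weak identity against $\phi=-D_h^{-k}(\zeta^2 D_h^k u)\in H^1_0(\Omega)$. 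Using the discrete integration-by-parts rule, the Leibniz rule for $D_h^k$ and the elementary bound $\|D_h^k w\|_{L^2}\le\|\partial_k w\|_{L^2}$, I would absorb the coercive term $\|\zeta\,D_h^k\nabla u\|_{L^2}^2$ on the left and estimate the remainder by $C\big(\|f\|_{L^2}+\|u\|_{H^1}\big)^2$ uniformly in $h$; letting $h\to0$ yields $\partial_k\nabla u\in L^2(B(x_0,r/2))$, hence $u\in H^2_{\mathrm{loc}}(\Omega)$ with the corresponding local estimate.

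\emph{Boundary regularity.} Since $\partial\Omega$ is compact and of class $C^2$, cover it by finitely many open sets $U_j$ each carrying a $C^2$ diffeomorphism straightening $\partial\Omega\cap U_j$ onto a piece of $\{x_N=0\}$ and $\Omega\cap U_j$ onto a half-ball $B^+$. Transporting the equation, the new unknown $w\in H^1(B^+)$ vanishes on $\{x_N=0\}$ and solves $-\sum_{i,j}\partial_i(a_{ij}\partial_j w)+\text{(lower order)}=\tilde f$ with $(a_{ij})$ symmetric, of class $C^1$, and uniformly elliptic (for $\Omega=\mathbb{R}^N_+$ this is the model case with $a_{ij}=\delta_{ij}$ and no charts are needed). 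Now I would repeat the difference-quotient argument, but only in the tangential directions $k\in\{1,\dots,N-1\}$ — legitimate because tangential translations preserve both $B^+$ and the homogeneous boundary condition — to obtain $\partial_k\nabla w\in L^2$ for $k<N$, which controls every second derivative of $w$ except $\partial_N^2 w$. That last one I recover algebraically from the equation: since $a_{NN}$ is bounded below away from $0$, one may write $\partial_N^2 w=a_{NN}^{-1}\big(\text{combination of }\tilde f,\ \partial w,\ \text{and }\partial_i\partial_j w\text{ with }(i,j)\ne(N,N)\big)$, all terms now in $L^2$. Hence $w\in H^2(B^+)$, and transporting back gives $u\in H^2(\Omega\cap U_j)$ with the transported estimate.

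\emph{Globalization, higher regularity, and the pointwise statements.} Summing the interior and boundary estimates against a finite partition of unity subordinate to the $U_j$ together with an interior set produces $u\in H^2(\Omega)$ and $\|u\|_{H^2}\le C\|f\|_{L^2}$, where the lower-order contribution $\|u\|_{H^1}$ is removed using the a priori bound $\|u\|_{H^1}\le\|f\|_{L^2}$ obtained by testing the weak identity against $u$ itself. The statement for $f\in H^m(\Omega)$ on a $C^{m+2}$ domain then follows by induction on $m$: with $u\in H^2$ available, each tangential derivative $v=\partial_k u$ satisfies a problem of the same type with right-hand side in $H^{m-1}$ and $C^{m+1}$ coefficients, so $v\in H^{m+1}$ by the inductive hypothesis, the missing normal derivative being read off from the equation differentiated $m$ times; this gives $u\in H^{m+2}(\Omega)$ with $\|u\|_{H^{m+2}}\le C\|f\|_{H^m}$. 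Finally, for $\Omega$ of class $C^2$ with bounded boundary (or $\Omega=\mathbb{R}^N_+$) there is a bounded extension operator $H^s(\Omega)\to H^s(\mathbb{R}^N)$, so $m>\tfrac N2$ gives $m+2>\tfrac N2+2$ and hence $H^{m+2}(\Omega)\hookrightarrow C^2(\overline{\Omega})$; when all data are $C^\infty$ one has $u\in H^{m+2}(\Omega)$ for every $m$, whence $u\in C^\infty(\overline{\Omega})$.

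\emph{Main obstacle.} I expect the boundary step to be the delicate part: arranging the tangential difference-quotient estimate so that the constants stay uniform over the finitely many charts, tracking the commutators generated by the variable coefficients $a_{ij}$ and by the cutoff, and then cleanly extracting $\partial_N^2 w$ from the equation. The interior estimate and the globalization are routine once this is in place, and the higher-regularity claim is just an iteration of these same two mechanisms.
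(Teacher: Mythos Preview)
Your proposal is correct and is exactly the approach the paper points to: the paper does not prove this theorem itself but cites \cite[Sec.~9.5]{Brezis}, noting that the proof proceeds ``by using the method of translations or the technique of difference quotients due to L.~Nirenberg,'' which is precisely the scheme you have outlined (interior difference quotients, tangential quotients after straightening the boundary, recovery of $\partial_N^2$ from the equation, partition of unity, and induction for higher $m$).
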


\begin{Theorem}\textbf{(regularity for the Neumann problem)}\label{N2}
With the same assumptions
as in Theorem \ref{D2} one obtains the same conclusions for the solution of the Neumann
problem, i.e., for $u \in H^1(\Omega)$ such that 
satisfy 
\begin{equation}
\int_\Omega \nabla u \nabla \phi + \int_\Omega u \phi = \int_\Omega f \phi \quad \quad \forall \, \phi \in H^1(\Omega).
\end{equation}
\end{Theorem}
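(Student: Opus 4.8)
\emph{Strategy.} The plan is to follow the proof of Theorem~\ref{D2} almost line by line, the only structural difference being that the role played there by $H^1_0(\Omega)$ is now played by $H^1(\Omega)$, and that the Neumann condition is \emph{natural}: it is already built into the weak identity and never has to be enforced by hand. First I would use a finite open cover of $\overline\Omega$ together with a subordinate partition of unity $\{\theta_k\}$ to localise the problem. On an interior patch the assertion is exactly the classical interior $L^2$-regularity for $-\Delta u+u=f$, which does not see the boundary condition at all; so the whole issue is concentrated on the boundary patches, together with the model configuration $\Omega=\mathbb{R}^N_+$, which is treated directly.

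\emph{The boundary patch.} Since $\Omega$ is of class $C^2$, near a boundary point there is a $C^2$-diffeomorphism $H$ flattening $\partial\Omega$ onto a piece of the hyperplane $\{y_N=0\}$ and carrying a neighbourhood into the half-ball $Q^+=\{y:\ |y|<1,\ y_N>0\}$. Writing $v=(\theta_k u)\circ H^{-1}$ and changing variables, the weak formulation becomes
\[
\int_{Q^+}\sum_{i,j=1}^N a_{ij}\,\partial_i v\,\partial_j\varphi \;+\; \int_{Q^+}\Big(\sum_{i=1}^N b_i\,\partial_i v + c\,v\Big)\varphi \;=\; \int_{Q^+}\tilde f\,\varphi \qquad \forall\,\varphi\in H^1(Q^+),
\]
where $(a_{ij})$ is symmetric, belongs to $C^1(\overline{Q^+})$ and is uniformly elliptic, $b_i,c\in C^0(\overline{Q^+})$ and $\tilde f\in L^2(Q^+)$. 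The point is that, because the test space is the \emph{full} $H^1(Q^+)$, I may insert $\varphi=D_\ell^{-h}\big(\eta^2\,D_\ell^{h}v\big)$ for any tangential direction $\ell\in\{1,\dots,N-1\}$, where $D_\ell^h$ is the difference quotient of step $h$ in the $y_\ell$-direction and $\eta$ a suitable cutoff: no boundary condition is violated, since tangential shifts preserve $Q^+$. The standard manipulation then gives $\|\eta\,D_\ell^h\nabla v\|_{L^2}\le C$ uniformly in $h$, hence $\partial_{y_\ell y_j}v\in L^2_{\mathrm{loc}}(Q^+)$ for all $j$ and all $\ell\le N-1$. The one remaining second derivative, $\partial_{y_N y_N}v$, is recovered algebraically: distributing the divergence, the equation reads in the interior $a_{NN}\,\partial_{y_N y_N}v=-\tilde f-\sum_{(i,j)\neq(N,N)}a_{ij}\partial_{ij}v-(\text{first order})$, and $a_{NN}\ge\theta_0>0$, so $\partial_{y_N y_N}v\in L^2$ as well. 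Pulling back through $H$ and summing over $k$ yields $u\in H^2(\Omega)$ with $\|u\|_{H^2}\le C\|f\|_{L^2}$. For $\Omega=\mathbb{R}^N_+$ the same argument applies verbatim, only simpler: no flattening is needed, one takes global tangential difference quotients $D_\ell^{-h}D_\ell^h u$ without cutoff, and $\partial_{y_N y_N}u=f-u-\sum_{\ell<N}\partial_{y_\ell y_\ell}u$.

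\emph{Higher regularity and smoothness.} The statement $u\in H^{m+2}(\Omega)$ for $\Omega\in C^{m+2}$, $f\in H^m$ is obtained by induction on $m$: once $u\in H^{m+1}$ is known, differentiating the flattened equation $m$ times tangentially and repeating the difference-quotient estimate controls all derivatives containing at most one $\partial_{y_N}$, after which the purely normal derivatives up to order $m+2$ are again read off from the equation. The Sobolev embedding $H^{m+2}(\Omega)\hookrightarrow C^2(\overline\Omega)$ for $m>\tfrac N2$ gives $u\in C^2(\overline\Omega)$, and letting $m\to\infty$ gives $u\in C^\infty(\overline\Omega)$ when $\Omega$ and $f$ are smooth.

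\emph{Main obstacle.} The delicate point, and the only genuine difference from the Dirichlet case, is the bookkeeping of boundary terms. In Theorem~\ref{D2} one could kill the contributions on $\{y_N=0\}$ using $u=0$ there; here one cannot, and must instead verify that the conormal contribution on $\{y_N=0\}$ produced by integration by parts is exactly the one already accounted for by the weak identity --- this equality \emph{is} the Neumann condition after the change of variables --- and that the difference-quotient test functions remain admissible and compatible with it. This forces the restriction to \emph{tangential} difference quotients (a normal one would both leave $Q^+$ and destroy the boundary relation), which is precisely why the normal second derivative has to be extracted from the equation rather than from a direct estimate. Once this is handled carefully, the remainder is a routine transcription of the Dirichlet argument.
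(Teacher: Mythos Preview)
Your outline is correct and is precisely the Nirenberg difference-quotient argument that the paper invokes: the paper does not give its own proof of this theorem but simply refers to \cite[Sec.~9.5]{Brezis}, where the method of tangential translations is carried out exactly as you describe. In particular, your identification of the one structural difference with the Dirichlet case --- that the full $H^1(\Omega)$ test space makes tangential difference quotients admissible without any boundary obstruction, while the purely normal second derivative must be read off algebraically from the equation --- matches the standard treatment line by line.
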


The proofs of Theorems \ref{D2} and \ref{N2} 
can be found   in \cite[Sec.~9.5]{Brezis} by using the method of translations or the technique of difference quotients due to L. Nirenberg.

Referring to relativistic effects in quantum materials, we study the smoothness of eigenfunctions of the Dirac operators under infinite-mass boundary conditions. Let $\Omega$ be a Lipschitz domain in $\Real^d, d = 2,3$. Putting 
\begin{equation}
N := \left\{\begin{array}{rcl}
	2 &\mbox{ if } d=2  \,,\\
	4 &\mbox{ if } d=3  \,,\\
\end{array}\right.
\end{equation}
The Dirac operator with MIT bag model is defined by
\begin{equation}
  \begin{aligned}
    T u &:= (-i \alpha \cdot \nabla + m \beta) u, \\
    \Dom(T) &:= \bigl\{ u \in H^1(\Omega; \mathbb{C}^N): 
        u|_{\partial \Omega} = -i \beta (\alpha \cdot n) u|_{\partial \Omega} \bigr\}.
  \end{aligned}
\end{equation}
Here $m$ is a non-negative constant that stands for the mass of a relativistic \mbox{(quasi-)}particle, $n
  :\partial\Omega\to\Real^d$ 
  is the outward unit normal of the set~$\Omega$,
 the notation $\alpha$ acts as $\alpha.w = \sum_{i=1}^d \alpha_i w_i$ for $ w \in \Real^d$ and  the $\C^{N\times N}$ matrices $\alpha_i$ can be defined as the Pauli spin matrices for $d=2$
\begin{equation} \label{def_Pauli_matrices}
  \alpha_1 = \sigma_1 := \begin{pmatrix} 0 & 1 \\ 1 & 0 \end{pmatrix}, \qquad
  \alpha_2 = \sigma_2 := \begin{pmatrix} 0 & -i \\ i & 0 \end{pmatrix}, \qquad 
  \beta = \sigma_3 := \begin{pmatrix} 1 & 0 \\ 0 & -1 \end{pmatrix}.
\end{equation}
and as
the Dirac matrices  for $d=3$
\begin{equation} \label{def_Dirac_matrices}
  \alpha_j = \alpha_j := \begin{pmatrix} 0 & \sigma_j \\ \sigma_j & 0 \end{pmatrix},  \quad 
   \quad \beta := \begin{pmatrix} I_2 & 0 \\ 0 & -I_2 
\end{pmatrix},
\end{equation}
with $j=1,2,3$ and $I_2$ is the $2 \times 2$-identity matrix.

The Dirac operators with
MIT (or infinite-mass) boundary conditions arise in quantum mechanics
in the study of  particles with spin $\frac{1}{2}$ or
in materials science of graphene and related structures.
 They have recently attracted a lot of attention \cite{Arrizibalaga-LeTreust-Raymond_2017, Arrizibalaga-LeTreust-Raymond_2018, Arrizibalaga-LeTreust-Mas-Raymond_2019, Behrndt, BBKO, J.B,2DK}.  
The characteristics of the eigenfunctions of the operator have important applications for the electronic and transport properties of these materials. 
Moreover, the regularity of functions from the operator domain
is closely related to the self-adjointness.
The Dirac operators with infinite-mass boundary conditions 
are known to be self-adjoint, at least if
the boundary~$\partial\Omega$ is $C^2$-smooth    
\cite{Benguria-Fournais-Stockmeyer-Bosch_2017b, J.B} in two and three dimensions, respectively, or if~$\Omega$ is a convex polygon in dimension two \cite{LeTreust-Ourmieres-Bonafos_2018}. For  merely Lipschitz domains, the self-adjointness is gained in a $H^\frac{1}{2}$ setting for both dimensions \cite{Behrndt}.
In three dimensional space $\mathbb{R}^3$, 
the self-adjointness of the operators in polygonal boxes is still an open problem.


The principle objective of the paper is to stimulate a systematic approach to the regularity of  eigenfunctions for self-adjoint Dirac operators under infinite-mass boundary conditions. One of the key features in our approach is to exploit a trace inverse map to establish a function with suitable regularity acting from the boundary to the given domain to utilise the elliptic regularity for Dirichlet and Neumann boundary conditions \cite{Evans, Grisvard, Brezis} in both cases of two and three dimensions. 
Another  point of the present paper is to apply the weak-convergence theory, the formula of square operator and 
the trace theorem in figuring out the smoothness of the eigenfunctions for Dirac operators employed on two-dimensional polygonal domains with some additional conditions. Furthermore, it completely proves that there exists a non-$H^2$-smooth eigenfunction of the operator in a two-dimensional rectangle.

The paper is structured into four sections. 
Section \ref{sec:prelim} provides notations and definitions used in the paper.
Section \ref{Sec.3} presents the proof for the regularity of Dirac eigenfunctions  employed on both smooth and polygonal domains in dimension two. In the final section, we demonstrate that on  half-space and smooth bounded domains in three dimensions.
\section{Preliminaries}\label{sec:prelim}

Throughout the paper, we write $C^\infty_0(\Omega; \C^q),q \in \mathbb{N}^* $ for the space of $\C^q$-valued smooth functions with compact support in $\Omega \subset \mathbb{R}^N$ and we set
$$C^\infty(\overline{\Omega}; \C^q) := \Large\{ \psi \upharpoonright \Omega : f \in C^\infty_0(\mathbb{R}^N; \C^q)\Large\}$$
and for $k \in \mathbb{N}$,
$$C^k(\overline{\Omega}; \C^q) := \{ \psi\in C^k(\Omega; \C^q); D^\alpha \psi \,\mbox{ has a continuous extension on} \,\overline{\Omega} \,\mbox{ for all } \,\alpha  \mbox{ with} \,|\alpha| \leq k \}.$$
In addition, we recall the definitions of the Sobolev space $W^{m,p}(\Omega)$ for positive integers $m, p$ and  the fractional Sobolev space $ W^{s,p}$ for a real number $s$.
\begin{equation}
\begin{aligned}
W^{m,p}(\Omega) = \{ u\in L^p(\Omega) : \forall \alpha \,\mbox{with} \,|\alpha|\leq m, \exists\, g_\alpha \in L^p(\Omega) \,\mbox{such that}
 \int_\Omega u D^\alpha\varphi = (-1)^{|\alpha|} \int_\Omega g_\alpha \varphi \\
 \,\forall \varphi \in C^\infty_0(\Omega)\},
 \end{aligned}
\end{equation}
where we use the standard multi-index notation $\alpha = (\alpha_1, \alpha_2,..., \alpha_N)$ with $\alpha_i \geq 0$ an integer,
\begin{equation}
|\alpha| = \sum_{i=1}^N \alpha_i \, \quad \mbox{and} \, \quad D^\alpha \varphi = \frac{\partial^{|\alpha|}}{\partial x_1^{\alpha_1} \partial x_2^{\alpha_2}...\partial x_N^{\alpha_N}}.
\end{equation}
We set $D^\alpha u = g_\alpha$ and $H^m(\Omega) = W^{m,2}(\Omega)$ is a Hilbert space equipped with the inner product $$(u, v)_{H^m} = \sum_{i=0}^m (D^i u, D^i v)_{L^2}.$$
For a positive non-integer number $s$, denote the integer part of $s$ by $[s]$, we define $W^{s,p}$ for $p \in \mathbb{N}^*$ or $H^s$ for $p=2$,
\begin{equation}
\begin{aligned}
W^{s,p}(\Omega) = \{ u \in W^{[s],p} : \underset{|\alpha|=[s]}{\sup} \int_\Omega \int_\Omega \frac{|D^\alpha u(x)-D^\alpha u(y)|^p}{|x-y|^{N + p(s-[s])}} \,dx \,dy < \infty  \}.
\end{aligned}
\end{equation}
It is a Banach space for the norm $\|u\|_{W^{s,p}} = \|u\|_{W^{[s],p}} + \underset{|\alpha|=[s]}{\sup} (\int_\Omega \int_\Omega \frac{|D^\alpha u(x)-D^\alpha u(y)|^p}{|x-y|^{N + p(s-[s])}} \,dx \,dy)^{\frac{1}{p}}.$

\section{Regularity of the two-dimensional Dirac eigenfunctions}\label{Sec.3}
Let us recall the two-dimensional Dirac operator 
\begin{equation}\label{operator1}
  T := 
  \begin{pmatrix}
    m & -i (\partial_1-i\partial_2) \\
    -i(\partial_1+i\partial_2) & -m
  \end{pmatrix}
  \qquad \mbox{in} \qquad
  \sii(\Omega;\C^2)
  \,,
\end{equation}
with the boundary conditions are encoded in the operator domain 
 \begin{equation}\label{operator2}
  \Dom(T) :=  
  \left\{
  \psi = 
  \begin{psmallmatrix}
  \psi_1 \\ \psi_2 
  \end{psmallmatrix}
  \in H^1(\Omega;\C^2) : \ \psi_2 = i (n_1 + i n_2) \psi_1
  \mbox{ on } \partial\Omega
  \right\}
  .
\end{equation}
The operator~$T$ is not necessarily self-adjoint for a general Lipschitz set, but it can be made self-adjoint in a certain $H^{\frac{1}{2}}$ setting \cite{Behrndt}.
 Nevertheless, if the domain~$\Omega$ is $C^2$-smooth bounded \cite{Benguria-Fournais-Stockmeyer-Bosch_2017b}  or  convex polygonal \cite{LeTreust-Ourmieres-Bonafos_2018} then the self-adjointness of the operator is valid. Furthermore,  the eigenvalues of~$T$ are purely discrete and symmetrically distributed on the real axis. In these cases, the eigenfunction of~$T$ denoted by $u =  \begin{psmallmatrix}
  u_1 \\ u_2 
  \end{psmallmatrix}$ corresponding to an eigenvalue $\lambda$, can be characterised by the following system:
\begin{equation}\label{system} 
\left\{
\begin{aligned}
  -i(\partial_1-i\partial_2) u_2 &= (\lambda-m) u_1 \quad && \mbox{in} \quad \Omega \,,\\
  -i(\partial_1+i\partial_2) u_1 &= (\lambda+m) u_2  \quad && \mbox{in} \quad \Omega \,,\\
 u_2 &= i (n_1 + i n_2) u_1 && \mbox{ on } \partial\Omega .
\end{aligned}  
\right.
\end{equation}

The regularity of the  eigenfunctions of $T$ in two dimensions will be studied in this section. In particular, we will consider whether the eigenfunctions belong to the Sobolev space $H^k(\Omega)$ for some $k \geq 2$, which gauges the smoothness of functions in terms of their derivatives. This is essential for determining the characteristics and properties of eigenfunctions. The outcomes of the result will shed light on the smoothness of eigenfunctions for the Dirac operators with the MIT bag model depending on that of domains, including smooth ones and polygons in dimension two.

\subsection{Dirac eigenfunctions on smooth domains}\label{Sec.no}
%
Let~$\Omega$ denote a regular bounded domain on which the Dirac operator $T$ is defined.
Recall that $T$ is self-adjoint and its spectrum is purely discrete. By dint of self-adjointness, it is apparent that the eigenfunctions of $T$ are in $H^1(\Omega)$.
 In order to prove the further regularity of the eigenfunctions of $T$, 
 it is crucial to note that the process of separating the real and imaginary components allows us to apply elliptic regularity to complex-valued functions. We have the following theorem:
\begin{Theorem}\label{Th1}
If $\partial\Omega$ belongs to the class $C^{k+2}$
with $k\in \mathbb{N}^*$, then every eigenfunction $u$ of $T$ is in the Sobolev space $H^{k+1}(\Omega)$.
\end{Theorem}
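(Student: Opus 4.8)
The plan is to bootstrap the regularity of the eigenfunction $u = \begin{psmallmatrix} u_1 \\ u_2 \end{psmallmatrix}$ from $H^1$ to $H^{k+1}$ by converting the first-order Dirac system \eqref{system} into a decoupled second-order elliptic problem for each component and then invoking the elliptic regularity Theorems \ref{D2} and \ref{N2}. First I would apply the operator $-i(\partial_1 - i\partial_2)$ to the second equation of \eqref{system} and use the first equation to obtain $-\Delta u_1 = (\lambda^2 - m^2) u_1$ in $\Omega$, since $(\partial_1 - i\partial_2)(\partial_1 + i\partial_2) = \Delta$; symmetrically $-\Delta u_2 = (\lambda^2 - m^2) u_2$. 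Thus each component solves a homogeneous Helmholtz-type equation with the right-hand side lying in whatever Sobolev space the component currently occupies. Separating real and imaginary parts, this is an elliptic scalar equation to which the cited regularity theory applies, provided we identify the boundary condition each component satisfies.

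The boundary conditions are the delicate point. The MIT condition $u_2 = i(n_1 + in_2) u_1$ on $\partial\Omega$ is a relation between the traces of the two components, not a classical Dirichlet or Neumann condition for either one alone. My strategy is to treat $u_1$ (say) as solving a Neumann-type problem whose boundary datum $g_1$ is built from the known boundary behaviour: differentiating the MIT relation tangentially along $\partial\Omega$ and combining with the two interior equations of \eqref{system} restricted to the boundary should express $\partial u_1 / \partial n$ in terms of $u_1$, $u_2$ and their tangential derivatives on $\partial\Omega$. This uses the $C^{k+2}$ smoothness of $\partial\Omega$ so that $n = (n_1, n_2)$ is itself $C^{k+1}$, hence multiplication by $n_1 + in_2$ preserves the relevant fractional Sobolev classes on the boundary. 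Equivalently, one may use a smooth trace-inverse (extension) map — flagged in the introduction as a key device — to lift the boundary data into $\Omega$ with a gain of regularity, reducing to the homogeneous Dirichlet/Neumann situation covered by Theorems \ref{D2}–\ref{N2}.

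With the elliptic problem and its boundary data set up, the argument proceeds by induction on the regularity index. At the base step, $u \in H^1(\Omega)$, so the trace of $u$ on $\partial\Omega$ is in $H^{1/2}(\partial\Omega)$; the derived Neumann datum $g_1$ then lies in $H^{1/2}(\partial\Omega)$, and the right-hand side $(\lambda^2 - m^2)u$ is in $L^2(\Omega) \subset H^1(\Omega)$. Theorem \ref{N2} (applied componentwise to real and imaginary parts, with $a_0$ absorbing the constant and sign issues, or after a trivial shift) then gives $u \in H^2(\Omega)$. Feeding this improved regularity back — now the trace is in $H^{3/2}$, the boundary datum gains a derivative, and the right-hand side is in $H^2$ — yields $u \in H^3(\Omega)$, and so on. Since $\partial\Omega \in C^{k+2}$ permits applying the higher-order part of Theorem \ref{D2}/\ref{N2} up to $H^{m+2}$ with $m = k-1$, the induction terminates at $u \in H^{k+1}(\Omega)$, which is the claim.

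The main obstacle I anticipate is the rigorous handling of the boundary condition: verifying that the tangential differentiation of the MIT relation, together with \eqref{system} on $\partial\Omega$, genuinely produces a \emph{legitimate} Neumann (or Dirichlet) datum in the correct fractional Sobolev space at each stage of the bootstrap, and that the constant $\lambda^2 - m^2$ does not obstruct applicability of the regularity theorems (which may require the shifted form $-\Delta + a_0$ with $a_0 > 0$ — one can always add and subtract a large constant). Keeping track of the interplay between the regularity of $n$ (controlled by the $C^{k+2}$ hypothesis) and the regularity of the traces at each inductive step is where the care is needed; the interior elliptic estimate itself is routine once the boundary data is correctly identified.
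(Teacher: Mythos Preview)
Your primary strategy --- deriving a Neumann condition for $u_1$ by tangential differentiation of the MIT relation --- has a loss-of-derivative problem that stalls the bootstrap. If you carry it out, writing $\partial_1+i\partial_2=(n_1+in_2)(\partial_n+i\partial_\tau)$ near $\partial\Omega$ and using the second line of \eqref{system} together with $u_2=i(n_1+in_2)u_1$, you get on $\partial\Omega$
\[
\partial_n u_1 \;=\; -(\lambda+m)\,u_1 \;-\; i\,\partial_\tau u_1 .
\]
At the base step $u_1\in H^1(\Omega)$, the trace is only in $H^{1/2}(\partial\Omega)$, so $\partial_\tau u_1\in H^{-1/2}(\partial\Omega)$, not $H^{1/2}$. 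Thus your claimed ``$g_1\in H^{1/2}(\partial\Omega)$'' fails, and Theorem~\ref{N2} as stated does not apply; feeding in $u_1\in H^m$ gives Neumann data only in $H^{m-3/2}$, so the iteration never gains a derivative. (This is really an oblique-derivative / Lopatinski\u{\i}--Shapiro situation, which needs different machinery than the plain Neumann theorem you invoke.)

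Your secondary suggestion --- lift the boundary coefficient via a trace-inverse/extension --- is exactly what the paper does, and it is the step that actually makes the argument work. Concretely: since $\partial\Omega\in C^{k+2}$, one picks $\psi\in C^{k+1}(\overline{\Omega})$ with $\psi=i(n_1+in_2)$ on $\partial\Omega$, sets $\phi:=u_2-\psi u_1$, and observes that $\phi$ satisfies a \emph{homogeneous Dirichlet} problem with right-hand side $\Delta\psi\,u_1+2\nabla\psi\cdot\nabla u_1\in L^2$; elliptic regularity then gives $\phi\in H^2$, after which the first-order system yields $(\partial_1\pm i\partial_2)(\psi u_1)\in H^1$, hence $\psi u_1\in H^2$, hence $u_2\in H^2$, and then $u_1\in H^2$ via the symmetric relation $u_1=-i(n_1-in_2)u_2$ on $\partial\Omega$. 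The induction runs in this Dirichlet framework without any tangential derivatives on the boundary. So the workable part of your proposal coincides with the paper's proof; the Neumann route you emphasize would need a different (and more delicate) boundary-regularity theorem than the ones quoted here.
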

\begin{proof}
We commence by observing that the regularity
$C^{k+2}$ implies that the outward unit normal $n$ 
belongs to $C^{k+1}(\partial\Omega)$. 
Therefore, there exists a complex-valued function 
$\psi \in C^{k+1}(\overline{\Omega}) $ such that 
$$ \psi:= i(n_1+ i n_2) \quad  \mbox{on} \quad \partial\Omega. $$
Putting $\phi:= u_2 - \psi u_1$,
then $\phi$ satisfies the following system:
\begin{equation}\label{system1} 
\left\{
\begin{aligned}
  -i(\partial_1-i\partial_2) (\psi u_1 + \phi) &= (\lambda-m) u_1 \quad && \mbox{in} \quad \Omega \,,\\
  -i(\partial_1+i\partial_2) u_1 &= (\lambda+m) (\psi u_1 + \phi) \quad && \mbox{in} \quad \Omega \,,\\
 \phi &= 0 && \mbox{ on } \partial\Omega .
\end{aligned}  
\right.
\end{equation}
Since $u$ is an eigenfunction of $(T, \Dom(T))$ then we can deduce that $\Delta u = -(\lambda^2-m^2) u$.
As a result,
$\phi$ satisfies the second-order elliptic problem:
\begin{equation}
\left\{
\begin{aligned}
  -\Delta\phi - (\lambda^2-m^2) \phi &= -\Delta u_2 - (\lambda^2-m^2) u_2 + \Delta (\psi u_1) + (\lambda^2-m^2) (\psi u_1) \quad && \mbox{in} \quad \Omega \,,\\ 
 \phi &= 0 && \mbox{ on } \partial\Omega .
\end{aligned}  
\right.
\end{equation}
Equivalently,
\begin{equation}\label{system2} 
\left\{
\begin{aligned}
  -\Delta\phi - (\lambda^2-m^2) \phi &= (\lambda^2-m^2) (\psi u_1) + \Delta\psi u_1 -(\lambda^2-m^2) \psi u_1 + 2 \nabla\psi \nabla u_1 \quad && \mbox{in} \quad \Omega \,,\\ 
 \phi &= 0 && \mbox{ on } \partial\Omega .
\end{aligned}  
\right.
\end{equation}
The right-hand side of the first equation in the system \eqref{system2} belongs to $L^2(\Omega)$,  therefore applying the elliptic regularity
for the Dirichlet problem, 
we obtain that $\phi \in H^2(\Omega)$.

Reformulating  system \eqref{system1}, we have
\begin{equation}\label{eq3}
\left\{
\begin{aligned}
  -i(\partial_1-i\partial_2) (\psi u_1)  &= i(\partial_1-i\partial_2) \phi + (\lambda-m) u_1 \quad && \mbox{in} \quad \Omega \,,\\
  -i(\partial_1+i\partial_2) u_1 &= (\lambda+m) (\psi u_1 + \phi) \quad && \mbox{in} \quad \Omega \,.
\end{aligned}  
\right.
\end{equation}
Taking $\psi \in C^{k+1}(\overline{\Omega})$ into account, we deduce that 
\begin{equation}
\left\{
\begin{aligned}
  -i(\partial_1-i\partial_2) (\psi u_1)  & \in H^1(\Omega) \,,\\
  -i(\partial_1+i\partial_2) (\psi u_1) & \in H^1(\Omega) \,.
\end{aligned}  
\right.
\end{equation}
Therefore, $\psi u_1 \in H^2(\Omega)$ and then $u_2 \in H^2(\Omega)$.
Returning to the original boundary condition, 
we deduce that
\begin{equation}\label{u_1}
 u_1 = -i(n_1 -i n_2)u_2 \quad \quad \mbox{on} \quad \partial\Omega
 \end{equation}
Employing the trace theorem, we can derive that $u_2 \in H^\frac{3}{2}(\partial\Omega)$.
Applying the elliptic regularity for 
the Dirichlet  problem \eqref{Dirichlet0} to $u_1$, 
with the boundary value belonging  to $H^\frac{3}{2}(\partial\Omega)$, we deduce that $ u_1 \in H^2(\Omega)$.
Moreover, $\Delta\psi \in C^{k-1}(\overline{\Omega})$ due to $\psi \in C^{k+1}(\bar{\Omega})$,
 so the right-hand side of  \eqref{system2} presently belongs to $H^1(\Omega)$.

Using induction, we assume that $u \in H^{m}(\Omega; \C^2), \,\forall m \leq k$, 
and we will show that $u \in H^{m+1}(\Omega; \C^2)$.
Indeed, the right-hand side of \eqref{system2} is presently in $H^{m-1}(\Omega)$. 
Applying the elliptic regularity of Theorem \ref{D2}, we obtain that $\phi \in H^{m+1}(\Omega)$. Taking  \eqref{eq3} into account, we deduce that
\begin{equation}
\left\{
\begin{aligned}
  -i(\partial_1-i\partial_2) (\psi u_1)  & \in H^m(\Omega) \,,\\
  -i(\partial_1+i\partial_2) (\psi u_1) & \in H^m(\Omega) \,.
\end{aligned}  
\right.
\end{equation}
As a result, $\psi u_1 \in H^{m+1}(\Omega)$. In addition, $ u_2 = \phi + \psi u_1$ and $ \phi \in H^{m+1}(\Omega)$ as being proved previously. Consequently, we obtain $u_2 \in H^{m+1}(\Omega)$.  
Similarly, 
we also have $u_1 \in H^{m+1}(\Omega)$ due to \eqref{u_1}.
Therefore, $u_1, u_2 \in H^{k+1}(\Omega)$ by induction on $m$. 
Thus, the proof of Theorem \ref{Th1} is completed.
\end{proof}
\begin{Remark}
The validity of Theorem \ref{Th1} still holds for half-space or unbounded domains with bounded $\partial\Omega $. The proof remains unchanged when we choose  $ D^\alpha \psi \in L^\infty(\Omega)$ $\forall \, |\alpha|\leq k+1$. However, in the case of half-space domains, there is no point-spectrum of the operator due to the fact that there is no eigenfunction of the corresponding Laplacian. The author thanks D.~Krej\v{c}i\v{r}\'ik for pointing out this point of the spectrum for the Dirac problem in half spaces.

\end{Remark}

\begin{Corollary}

Let $\Omega$ be a $ C^\infty $-smooth bounded domain of $\mathbb{R}^2$ then every eigenfunction of $T$ is a $C^\infty$-smooth function on $\Omega$.
\end{Corollary}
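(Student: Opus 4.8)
The plan is to bootstrap the finite-order regularity of Theorem \ref{Th1} to infinite order and then invoke the Sobolev embedding theorem. First I would observe that a $C^\infty$ open set $\Omega\subset\mathbb{R}^2$ has boundary of class $C^{k+2}$ for every $k\in\mathbb{N}^*$; hence Theorem \ref{Th1} applies for each such $k$ and shows that an arbitrary eigenfunction $u$ of $T$ satisfies $u\in H^{k+1}(\Omega;\mathbb{C}^2)$ for all $k\in\mathbb{N}^*$, that is, $u\in\bigcap_{m\geq 2}H^m(\Omega;\mathbb{C}^2)$.

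Next I would apply the Sobolev embedding theorem in dimension $N=2$: for a bounded open set with $C^\infty$ (indeed already Lipschitz) boundary one has $H^m(\Omega)\hookrightarrow C^j(\overline{\Omega})$ whenever $m>j+\frac{N}{2}=j+1$. Taking $m=j+2$ for every $j\in\mathbb{N}$ yields $u\in C^j(\overline{\Omega};\mathbb{C}^2)$ for all $j$, hence $u\in\bigcap_{j\in\mathbb{N}}C^j(\overline{\Omega};\mathbb{C}^2)=C^\infty(\overline{\Omega};\mathbb{C}^2)$, and a fortiori $u\in C^\infty(\Omega;\mathbb{C}^2)$, which is the assertion. (Equivalently, once $u$ is known to be smooth one can appeal to the final clause of Theorem \ref{D2}: since $\Delta u=-(\lambda^2-m^2)u$ has a $C^\infty(\overline{\Omega})$ right-hand side, the $C^\infty$ case of Dirichlet regularity applied componentwise, after the real/imaginary splitting used in the proof of Theorem \ref{Th1}, gives the same conclusion.)

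I do not expect any substantial obstacle here; the proof is essentially a one-line consequence of Theorem \ref{Th1}. The only points needing a word of care are (i) that the conclusions of Theorem \ref{Th1} for different orders $k$ are simultaneously available — which is automatic, the theorem being proved for each fixed $k$ and the resulting scale of Sobolev memberships being nested — and (ii) that the hypotheses of the Sobolev embedding (a cone condition, or that $\Omega$ be an extension domain) are met, which is guaranteed by the $C^\infty$-smoothness of $\partial\Omega$. For the unbounded case ($\partial\Omega$ bounded, or $\Omega=\mathbb{R}^2_+$) one combines the corresponding version of the embedding with the Remark following Theorem \ref{Th1}.
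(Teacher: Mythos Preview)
Your proposal is correct and matches the paper's approach exactly: the paper's entire proof is the single sentence ``The proof is obtained directly from Theorem~\ref{Th1}.'' You have simply spelled out the details (applying Theorem~\ref{Th1} for every $k$ and then invoking Sobolev embedding) that the paper leaves implicit.
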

\begin{proof}
The proof is obtained directly from Theorem \ref{Th1}.
\end{proof}

\begin{Remark}
If $\Omega$ bounded is of class $C^{1,1}$ and the extension $g_1$ of $g = i(n_1 + i n_2)$ on $\partial\Omega$ to $\Omega$ satisfies
  $\Delta g_1, g_1 \in L^\infty(\Omega; \C)$ and $\nabla g_1 \in L^\infty(\Omega; \C^2)$
then the eigenfunctions of $(T, \Dom(T))$ are also in $H^2(\Omega)$ by dint of Theorem \ref{Th1}.
\end{Remark}
 
The regularity of the eigenfunctions is still valid for the Dirac operator with an additional potential (which is similarly defined  in Section \ref{Sec.4}) by using the analogous arguments in the proofs of Theorems \ref{Th1}. To make the paper concise, we skip this problem in two dimensions.

 \subsection{Dirac eigenfunctions on polygons}\label{Sec.well}
%
For further purposes, let us consider $\Omega$ is a two-dimensional polygon in $\mathbb{R}^2$ and $ (T, \Dom(T))$ denotes  the Dirac operator endowed with  MIT boundary conditions
%
\begin{equation}\label{operator1'}
\begin{aligned}
  T &:= 
  \begin{pmatrix}
    m & -i (\partial_1-i\partial_2) \\
    -i(\partial_1+i\partial_2) & -m
  \end{pmatrix}
  \qquad \mbox{in} \qquad
  \sii(\Omega;\C^2)
  \,,\\
  \Dom(T) &:=  
  \left\{
  \psi = 
  \begin{psmallmatrix}
  \psi_1 \\ \psi_2 
  \end{psmallmatrix}
  \in H^1(\Omega;\C^2) : \ \psi_2 = C_i \psi_1
  \mbox{ on } \gamma_i
  \right\}
  ,
  \end{aligned}
\end{equation}
where $\{\Gamma_i\}$ are sides of $\Omega$ and $C_i$ is a constant computed as the value of $i (n_1 + i n_2)$ on each side $\Gamma_i$ of the polygon. 
Therefore, if we have $u \in C^\infty(\overline{\Omega}) $ and $ u \in \Dom(T)$ then it is apparent that $ \partial_i u \in \Dom(T) \,\forall i = 1,2$. This property is critical for the domain of operator employed on polygons.
Recall that if the polygon is convex then $(T, \Dom(T))$ is self-adjoint \cite{LeTreust-Ourmieres-Bonafos_2018}. As a result,
 the spectrum of $(T, \Dom(T))$ is purely discrete and the eigenfunctions are in $H^1(\Omega; \C^2)$. 

\begin{figure}[h]
  \begin{center}
  \includegraphics[scale=0.4]{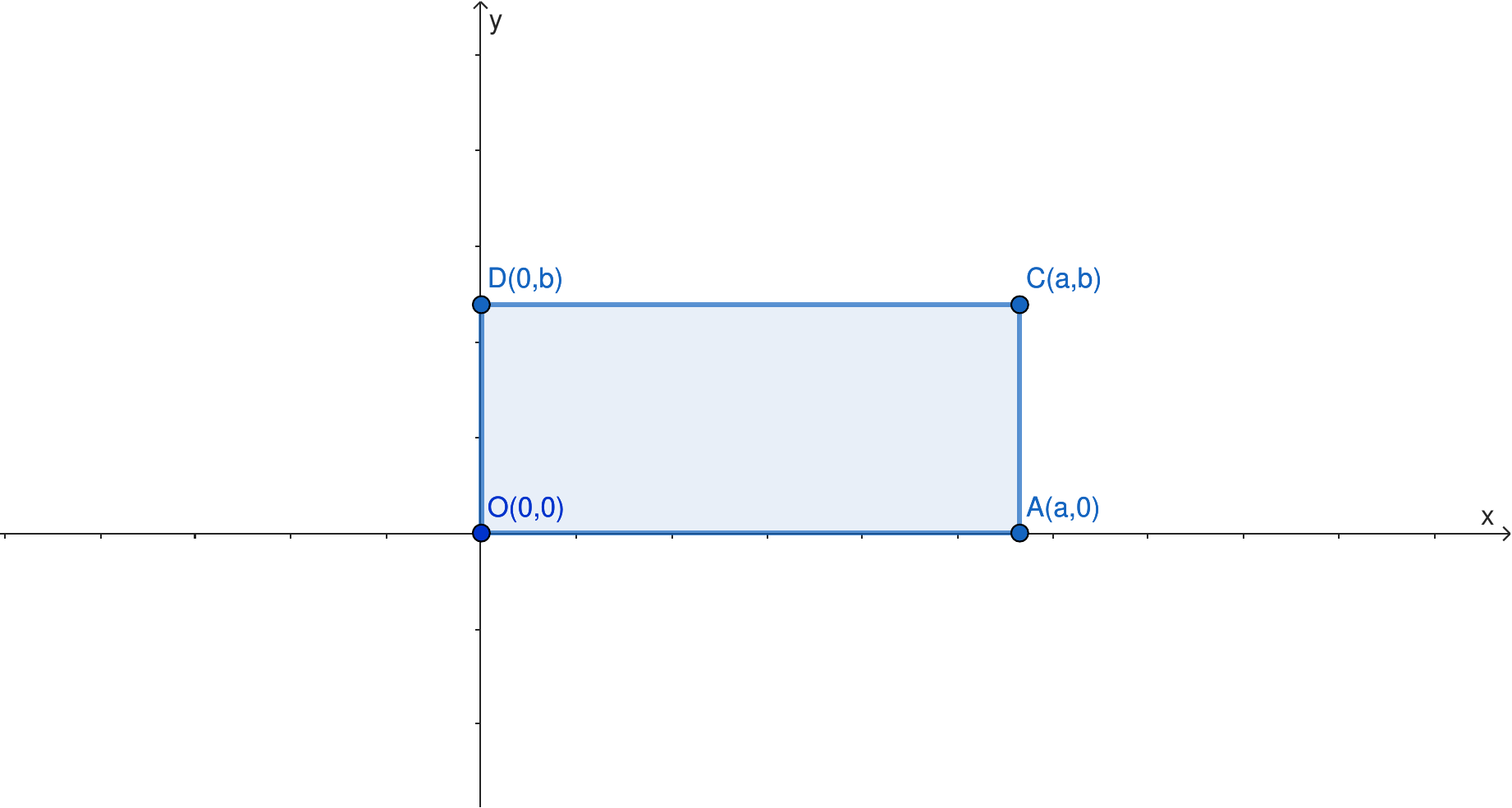}
  \caption{The rectangle $OACD$.
  \\
  }
  \label{Fig}
  \end{center}
\end{figure}
 
The previous techniques applied to smooth domains are invalid for polygons. 
 Indeed,  there does not exist
 a function $\psi \in H^1(\Omega) $ such that $\Delta\psi \in L^2(\Omega)$ and $\psi = i(n_1+ i n_2)$ on the boundary of $\Omega$. 
To see it, let us for example consider the Dirac operator 
defined on a rectangle $OACD$, see Figure \ref{Fig}. 
We reformulate system \eqref{system}
\begin{equation}\label{ReOACD} 
\left\{
\begin{aligned}
  -i(\partial_1-i\partial_2) u_2 &= (\lambda -m) u_1
  && \mbox{in} \quad \Omega \,,
  \\
  -i(\partial_1+i\partial_2) u_1 &= (\lambda +m) u_2
  && \mbox{in} \quad \Omega \,,
  \\
  u_2 &= u_1 
  && \mbox{on} \quad OA \,,
  \\
  u_2 &= - u_1 
  && \mbox{on} \quad CD \,,
  \\
  u_2 &= iu_1 
  && \mbox{on} \quad AC \,,
  \\
  u_2 &= -iu_1 
  && \mbox{on} \quad OD
  \,.
\end{aligned}  
\right.
\end{equation}
To obtain the expected result, 
we look for
 a function $\psi \in H^1(\Omega)$ 
  satisfying  system 
\begin{equation}\label{system4} 
\left\{
\begin{aligned}
  \Delta\psi & \in L^2(\Omega) \,,\\ 
  \psi &= 1 
  && \mbox{on} \quad OA \,,
  \\
  \psi &= -1
  && \mbox{on} \quad CD \,,
  \\
  \psi &= i
  && \mbox{on} \quad AC \,,
  \\
  \psi &= -i
  && \mbox{on} \quad OD
  \,.
\end{aligned}  
\right.
\end{equation}
By estimating the integral on the boundary of the domain, we have
\begin{equation}
\begin{aligned}
\int_{OA} \int_{OD} \frac{|\phi(0,y)- \phi(x,0)|^2}{x^2+y^2} dy dx &= 2 \int_0^a \int_0^b \frac{1}{x^2+y^2} dy dx\\
&= 2 \int_0^a \frac{\arctan\frac{b}{x}}{x} \, dx\\
&\geq 2 \arctan\frac{b}{a} \int_0^a \frac{dx}{x} = +\infty.
\end{aligned}
\end{equation}
It yields that $\psi \notin H^\frac{1}{2}(\partial\Omega)$. As a result, $\psi \notin H^1(\Omega) $ due to the trace theorem. It contrasts with the imposed hypothesis.
Thus, the approach of extending the function defined by $i(n_1 + i n_2)$ on the boundary to the domain seems incapable.

For general polygons, we consider a sector $\Omega =
 OAB$ with
$
  O := ( 0,0) \, ,
  A := (a, 0) \, ,
  B : = ( a \cos\alpha, a \sin\alpha)  
.$ \\
 Let consider the Dirac problem, subject to the MIT boundary conditions
\begin{equation}\label{Sec1}
\left\{
\begin{aligned}
  -i(\partial_1-i\partial_2) u_2 &= f_1 
  && \mbox{in} \quad H^1(\Omega) \,,
  \\
  -i(\partial_1+i\partial_2) u_1 &= f_2
  && \mbox{in} \quad H^1(\Omega) \,,
  \\
  u_2 &= C_1 u_1 
  && \mbox{on} \quad OA \,,
  \\
  u_2 &= C_2 u_1 
  && \mbox{on} \quad OB \,,
  \\
  u_2 &= u_1 = 0 
  && \mbox{on} \quad \widehat{AB} \,.
\end{aligned}  
\right.
\end{equation}
 Here 
$  
   u = 
  \begin{psmallmatrix}
    u_1 \\ u_2 
  \end{psmallmatrix}
 \in W^{1,2}(\Omega;\C^2)$ and $ \C^* \ni C_1, C_2 $, which equals to $ i(n_1 + i n_2)$ on each corresponding side. 
Recall the elliptic problem for convex domains
\begin{Lemma}\label{Convex}
Let $\Omega$ be a convex , bounded, open subset of $\mathbb{R}^d, d \in \mathbb{N}^*$ then for each $f \in L^2(\Omega)$, there exists a unique $u \in H^2(\Omega)$, the solution of
\begin{equation} 
\left\{
\begin{aligned}
  -\Delta u &= f
  && \mbox{in} && \Omega 
  \,,
  \\
  u &= 0 
  && \mbox{on} && \partial\Omega 
  \,.
\end{aligned}
\right.
\end{equation}
\end{Lemma}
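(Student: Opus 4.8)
The plan is to establish $H^2$ elliptic regularity for the Dirichlet Laplacian on a convex domain, which is a classical result; I will reproduce its standard proof. First, existence and uniqueness of a weak solution $u \in H^1_0(\Omega)$ is immediate from the Lax--Milgram theorem applied to the bilinear form $a(u,v)=\int_\Omega \nabla u\cdot\nabla v$ on $H^1_0(\Omega)$, which is coercive by the Poincar\'e inequality since $\Omega$ is bounded. The real content is the a~priori estimate $\|u\|_{H^2(\Omega)} \le C\|f\|_{L^2(\Omega)}$ and the membership $u \in H^2(\Omega)$, and here convexity (rather than $C^{1,1}$ smoothness) is exactly what makes the boundary term have a favourable sign.

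The key steps, in order: (i) approximate $\Omega$ from inside by an increasing sequence of smooth bounded convex domains $\Omega_n$, or alternatively work directly on $\Omega$ via the Nirenberg difference-quotient method in directions tangential to the boundary; I would take the approximation route as it keeps the boundary term transparent. (ii) On each smooth convex $\Omega_n$, solve $-\Delta u_n = f$ with $u_n \in H^1_0(\Omega_n)\cap H^2(\Omega_n)$ (standard smooth-domain regularity, e.g.\ Theorem~\ref{D2}), and write the Rellich--Ne\v{c}as--Pohozaev-type identity obtained by integrating $|\Delta u_n|^2$ against itself and integrating by parts twice: $\int_{\Omega_n}|\Delta u_n|^2 = \int_{\Omega_n}|\nabla^2 u_n|^2 + \int_{\partial\Omega_n}\!\big(\text{second fundamental form applied to the tangential gradient}\big)$. (iii) Invoke convexity: the second fundamental form of $\partial\Omega_n$ is nonnegative, so the boundary integral is $\ge 0$, whence $\|\nabla^2 u_n\|_{L^2(\Omega_n)}^2 \le \|f\|_{L^2(\Omega_n)}^2 \le \|f\|_{L^2(\Omega)}^2$, uniformly in $n$. (iv) Combine with the energy estimate $\|u_n\|_{H^1}\le C\|f\|_{L^2}$ to get a uniform $H^2$ bound; extract a weakly convergent subsequence $u_n \rightharpoonup u$ in $H^2_{\mathrm{loc}}$, identify the limit with the weak solution on $\Omega$ (using that $u_n$, extended by zero, converges to a function in $H^1_0(\Omega)$ solving the same equation, and uniqueness), and pass the $H^2$ bound to the limit by weak lower semicontinuity of the norm.

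I expect the main obstacle to be step~(i)--(ii) done rigorously: constructing the interior convex smooth approximation with controlled boundary geometry and, more delicately, justifying that the limit $u$ obtained this way genuinely lies in $H^2(\Omega)$ (not merely $H^2_{\mathrm{loc}}$) and coincides with the weak solution on all of $\Omega$. The cleanest way around the approximation subtleties is in fact the difference-quotient argument of Nirenberg carried out near the boundary after localizing and flattening: tangential difference quotients are controlled for free, and the convexity of $\Omega$ is what lets one bootstrap the missing normal-normal second derivative from the equation $\partial_{nn}u = -f - \partial_{tt}u$ together with a one-sided control of the boundary term; either route ultimately rests on the nonnegativity of the boundary curvature term, which is the heart of the matter. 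Since the excerpt already grants Theorem~\ref{D2} for smooth domains, the approximation route is the shorter one to write, and I would present it, relegating the boundary identity to the key computation and emphasising that convexity $\Rightarrow$ nonnegative boundary term is the only place the hypothesis enters.
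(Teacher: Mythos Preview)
Your proposal is correct and in fact goes well beyond the paper's own treatment: the paper does not prove this lemma at all but simply writes ``See \cite{Grisvard}.'' The argument you sketch---smooth convex interior approximation, the identity $\int|\Delta u|^2 = \int|\nabla^2 u|^2 + \text{(boundary term)}$, nonnegativity of the boundary term by convexity, and passage to the limit---is precisely the classical proof recorded in Grisvard's book (originally due to Kadlec), so your approach is exactly the one the paper is implicitly invoking.
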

\begin{proof}
See \cite{Grisvard}.
\end{proof}
Denote $ M= ( \frac{1}{n}, 0), N= ( \frac{a}{n} \cos\alpha, \frac{a}{n} \sin\alpha )$ and $ M'= (\frac{a}{2n}, 0), N' = (\frac{a}{2n} \cos\alpha, \frac{a}{2n} \sin\alpha ), n \in \mathbb{N}^*$ then we have $ M, M' \in OA, N, N' \in OB$. \\
\begin{Lemma}\label{ReConvex}
If $u$ is a solution of problem \eqref{Sec1} then $u \in H^2(\Omega\setminus OMN)$.
\end{Lemma}

\begin{proof}
Take $\varphi \in C^\infty_0 ( \overline{\Omega}\setminus \overline{OM'N'}; \mathbb{R})$ and $\varphi = 1$ on $ \overline{\Omega}\setminus OMN$ then $\varphi = 0$ in $OM'N'$ and it deduces that there are $ f_1', f_2' \in H^1(\Omega) $ such that
\begin{equation}\label{Convex1} 
\left\{
\begin{aligned}
  -i(\partial_1-i\partial_2) (\varphi u_2) &= f'_1 
  && \mbox{in} \quad H^1(\Omega) \,,
  \\
  -i(\partial_1+i\partial_2)(\varphi u_1) &= f'_2
  && \mbox{in} \quad H^1(\Omega) \,,
  \\
  \varphi u_2 &= C_1 \varphi u_1 
  && \mbox{on} \quad OA \,,
  \\
 \varphi u_2 &= C_2 \varphi u_1 
  && \mbox{on} \quad OB \,,
  \\
  \varphi u_2 &= \varphi u_1 = 0 
  && \mbox{on} \quad \widehat{AB} \,.
\end{aligned}  
\right.
\end{equation}
We choose a function $g \in C^\infty( \partial\Omega)$ such that
\begin{equation} 
  g = \begin{cases}
   & C_1 \varphi  
   \quad \quad \mbox{on} \quad OA \,,
  \\
  & C_2 \varphi  
   \quad \quad \mbox{on} \quad OB \,,
  \end{cases} 
\end{equation}
Hence, there exists a function $g_1 \in C^\infty(\overline{\Omega}; \C)$ such that $g_1 = g$ on $\partial\Omega$. 

Putting $\phi := \varphi u_2 - g_1 u_1$ then $\phi \in H^1_0(\Omega)$ and
\begin{equation}
\Delta\phi = \Delta(\varphi u_2) - \Delta g_1 u_1 -2 \nabla g_1 \nabla u_1 - g_1 \Delta u_1 \in L^2(\Omega) .
\end{equation}
Hence, $\phi \in H^2(\Omega)$ due to Lemma \ref{Convex}. As a result,
$$ -i(\partial_1-i\partial_2) (\varphi u_2 - g_1 u_1) \in H^1(\Omega) .$$
Taking \eqref{Convex1} into account, we obtain $(\partial_1-i\partial_2) (g_1 u_1) \in H^1(\Omega)$. In addition, $(\partial_1+i\partial_2)u_1 \in H^1(\Omega)$ due to  \eqref{Sec1}, and thus, $(\partial_1+i\partial_2)(g_1 u_1) \in H^1(\Omega)$.
Consequently, we have
\begin{equation} 
\left\{
\begin{aligned}
  \partial_1 (g_1 u_1) & \in H^1(\Omega)   
  \,,
  \\
  \partial_2 (g_1 u_1) & \in H^1(\Omega)
  \,.
\end{aligned}
\right.
\end{equation}
Equivalently, we get $g_1 u_1 \in H^2(\Omega)$ and thus, $\varphi u_2 = \phi + g_1 u_1 \in H^2(\Omega)$. Because of the definition of the function $\varphi$, we obtain that $ u_2 \in H^2( \Omega\setminus OMN)$.

Reversely, we have 
\begin{equation}
\begin{aligned}
u_1 &= C_1^{-1} u_2 
  && \mbox{on} \quad OA \,,
  \\
  u_1 &= C_2^{-1} u_2 
  && \mbox{on} \quad OB \,,
  \\
  u_1 &= u_2 = 0 
  && \mbox{on} \quad \widehat{AB} \,.
\end{aligned}  
\end{equation}
Iterating the previous process, we also obtain that $ u_1 \in H^2 (\Omega\setminus OMN)$. Then, it concludes the proof of the lemma.
\end{proof}
\begin{Remark}
Putting $\Omega_n := \Omega\setminus \overline{OMN}$ then $ \Omega_n \xrightarrow{ n\rightarrow \infty} \Omega$. Hence,  $u \in H^2(\Omega\setminus V)$ for any neighbourhood $V$ of the vertex of the sector.

\end{Remark}

\begin{Theorem}\label{Theo.main}
Let $\Omega$ be a convex polygon in $\mathbb{R}^2$ and $V$ be any neighbourhood of all the set of vertices of $\Omega$ then the eigenfunctions of $T$ are in $ H^2(\Omega\setminus V)$.

\end{Theorem}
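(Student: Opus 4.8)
The plan is to localize the problem near each vertex and use Theorem~\ref{ReConvex} as the building block. Let $\Omega$ be a convex polygon with vertices $p_1,\dots,p_L$, and let $V$ be an arbitrary (open) neighbourhood of the set $\{p_1,\dots,p_L\}$. We may assume $V$ is a disjoint union of small open sets $V_j\ni p_j$, shrinking $V$ if necessary (proving the statement for a smaller neighbourhood is stronger since $\Omega\setminus V\subset\Omega\setminus V'$ when $V'\subset V$). First I would fix one vertex $p_j$ and observe that, after a rigid motion, a neighbourhood of $p_j$ in $\overline\Omega$ coincides with a neighbourhood of the apex in a sector $OAB$ of the type considered in Theorem~\ref{ReConvex}, where the opening angle $\alpha\in(0,\pi)$ by convexity, and the two sides $OA$, $OB$ lie on the two edges of $\Omega$ meeting at $p_j$, with $C_1,C_2$ the corresponding values of $i(n_1+in_2)$.

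Next I would pass from the global eigenfunction $u$ on $\Omega$ to a function on the model sector. Since $u\in H^1(\Omega;\C^2)$ solves \eqref{system}, a cut-off argument exactly as in the proof of Theorem~\ref{ReConvex} produces the right-hand sides: choose $\chi_j\in C^\infty_0$ supported in a small ball around $p_j$, equal to $1$ on a slightly smaller ball, and set $v:=\chi_j u$. Then $v$ has the same MIT boundary conditions on $OA,OB$ near $O$, vanishes near the circular arc closing off the sector, and satisfies a Dirac system with right-hand sides $f_1,f_2\in H^1$ (the commutators $[\,\cdot\,,\chi_j]$ involve only $\nabla\chi_j$, which is supported away from $p_j$ where $u\in H^1$ suffices, and away from $p_j$ higher interior/smooth-boundary regularity of $u$ from Theorem~\ref{Th1} applies). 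Thus $v$ is a solution of a problem of the form \eqref{Sec1} on the model sector $OAB$, and Theorem~\ref{ReConvex} yields $v\in H^2(OAB\setminus OM_jN_j)$, i.e. $u\in H^2$ on $\Omega$ minus a small triangular wedge at $p_j$ whose size $\sim 1/n$ can be taken as small as we wish; in particular we may arrange $OM_jN_j\subset V_j$.

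Finally I would patch: away from all vertices, i.e. on $\overline\Omega\setminus V$, the boundary $\partial\Omega$ is smooth (it is a union of straight segments) and $u\in\Dom(T)$ with a locally constant, hence $C^\infty$, boundary coefficient, so the argument of Theorem~\ref{Th1} (or a direct interior/straight-edge elliptic regularity estimate applied to the real and imaginary parts, using $\Delta u=-(\lambda^2-m^2)u$) gives $u\in H^2$ in a neighbourhood of every point of $\overline\Omega\setminus V$. Combining this with the vertex estimates above via a finite partition of unity subordinate to $\{V_j\}$ and the complement, one concludes $u\in H^2(\Omega\setminus V)$.

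The main obstacle I anticipate is the bookkeeping in the localization step: verifying that multiplying by a cut-off really reproduces a problem of the exact form \eqref{Sec1} (in particular that the homogeneous Dirichlet condition $v_1=v_2=0$ on the artificial arc $\widehat{AB}$ holds and that $f_1,f_2$ genuinely land in $H^1$ rather than merely $L^2$), and making sure the triangular exclusion $OM_jN_j$ from Theorem~\ref{ReConvex} can be nested inside the given $V_j$ — this is where one uses that $n$ in Theorem~\ref{ReConvex} is arbitrary, so the excluded wedge shrinks to the vertex. The rest is a routine finite patching argument.
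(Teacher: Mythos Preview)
Your proposal is correct and follows essentially the same approach as the paper, whose proof consists of the single sentence ``Utilising the partition unity and Theorem~\ref{ReConvex}, we can achieve the desired result.'' Your fleshing-out of the localization (cutting off near each vertex to reduce to the model sector problem~\eqref{Sec1}, then invoking Theorem~\ref{ReConvex}) and of the regularity away from the vertices via a local version of Theorem~\ref{Th1} on the straight edges is a reasonable expansion of that one-line argument.
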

\begin{proof}
Using a partition of unity and Lemma \ref{ReConvex}, we can obtain the desired result.
\end{proof}

Establishing the regularity proof of solutions for elliptic problems in polygonal domains is exceedingly challenging and necessitates the utilisation of numerous techniques. Virtually, the significant results come from Dirichlet and Neumann boundary conditions.   In this paper, we  propose an approach supported by an additional hypothesis and a formula of the square 
of the Dirac
operator described in \cite{Tuyen}. 

\begin{Lemma}\label{L2}
Let $\Omega$ be a polygon in $\mathbb{R}^2$ and $U$ be the set of vertices of $\Omega$ then $C^\infty_0(\overline{\Omega}\setminus \{U\}) \cap \Dom(T)$  is a core of~$T$.
\end{Lemma}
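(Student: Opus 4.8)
The plan is to show that $C^\infty_0(\overline\Omega\setminus U)\cap\Dom(T)$ is dense in $\Dom(T)$ with respect to the graph norm $\|\psi\|_T^2 := \|\psi\|_{L^2}^2 + \|T\psi\|_{L^2}^2$; since $T$ is self-adjoint (in particular closed) on $\Dom(T)$, this is exactly the statement that the subspace is a core. Fix $\psi\in\Dom(T)\subset H^1(\Omega;\C^2)$. The argument proceeds in two independent approximation steps: first regularize to reach smooth functions that still satisfy the MIT boundary condition, then cut away the vertices. A key structural remark to exploit is the one already recorded in the excerpt: on a polygon the boundary condition is \emph{piecewise constant} ($\psi_2 = C_i\psi_1$ on each side $\gamma_i$), so the condition is preserved both by multiplication by scalar cutoffs that are constant near the boundary and, more delicately, by suitable convolution-type smoothings near the interior of each edge.

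For the first step I would argue as follows. The square formula from \cite{Tuyen} identifies $T^2$ on $\Dom(T^2)$ with (a shifted) Dirichlet/Neumann-type Laplacian acting componentwise, whose form domain controls $H^1$; by the spectral theorem for the self-adjoint operator $T$, the subspace $\Dom(T^2)$ is a core of $T$ (functions $\chi_n(T)\psi$ with $\chi_n$ smooth cutoffs of the spectrum converge to $\psi$ in the graph norm and lie in $\Dom(T^k)$ for all $k$). By the two-dimensional regularity results of Section~\ref{Sec.well} (Theorem~\ref{ReConvex} and its corollary) together with Sobolev embedding, elements of $\bigcap_k\Dom(T^k)$ are $C^\infty$ up to the boundary away from the vertices. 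Thus it suffices to approximate, in the graph norm, a function $\psi\in\Dom(T)$ that is already $C^\infty$ on $\overline\Omega\setminus V$ for some neighbourhood $V$ of the vertex set, by functions in $C^\infty_0(\overline\Omega\setminus U)\cap\Dom(T)$.

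For the second step, take a standard family of cutoffs: for each vertex $v\in U$ choose $\eta_{v,n}\in C^\infty(\overline\Omega;[0,1])$ with $\eta_{v,n}\equiv 0$ on $B(v,1/n)$, $\eta_{v,n}\equiv 1$ outside $B(v,2/n)$, and $|\nabla\eta_{v,n}|\le Cn$; set $\eta_n := \prod_{v\in U}\eta_{v,n}$ and $\psi_n := \eta_n\psi$. Since $\eta_n$ is a \emph{scalar} real-valued function and the boundary condition $\psi_2=C_i\psi_1$ is linear and homogeneous on each $\gamma_i$, we have $\psi_n\in\Dom(T)$; moreover $\psi_n$ is compactly supported away from $U$ and smooth up to the boundary, so $\psi_n\in C^\infty_0(\overline\Omega\setminus U)\cap\Dom(T)$ (possibly after a further, boundary-condition-preserving interior mollification, which is routine once one is away from the corners because the edge normals are locally constant). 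Finally, $\psi_n\to\psi$ in $L^2$ by dominated convergence, and $T\psi_n = \eta_n T\psi + [T,\eta_n]\psi$, where the commutator is the first-order operator with coefficients $\nabla\eta_n$; then
\begin{equation}
\|T\psi_n - T\psi\|_{L^2} \le \|(\eta_n-1)T\psi\|_{L^2} + \big\||\nabla\eta_n|\,|\psi|\big\|_{L^2},
\end{equation}
and the first term vanishes by dominated convergence while the second is bounded by $Cn\,\|\psi\|_{L^2(\bigcup_v B(v,2/n))}$, which tends to $0$ because $\|\psi\|_{L^2(B(v,r))}^2 = o(r^2)$ in two dimensions for $\psi\in L^2$ — or, more robustly, because $\psi\in H^1$ near the edges away from corners and one may instead use the classical fact that $H^1$ functions are stable under removal of a point (capacity-zero) set via logarithmic cutoffs. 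Using logarithmic cutoffs $\eta_n$ with $\|\nabla\eta_n\|_{L^2}\to 0$ sidesteps any delicate pointwise decay estimate.

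The main obstacle is the second approximation step near the vertices: a priori an eigenfunction, or a general element of $\Dom(T)$, need not decay at a corner, and the naive cutoff $\nabla\eta_n\sim n$ on an annulus of area $\sim n^{-2}$ only gives boundedness, not smallness, of the commutator term. The clean fix is to use the two-dimensional logarithmic capacity of a point: choose $\eta_n$ supported away from $B(v,e^{-n})$, equal to $1$ outside $B(v,1/n)$, and interpolating so that $\|\nabla\eta_n\|_{L^2(\Omega)}^2 \lesssim 1/\log n \to 0$; then the commutator term is controlled by $\|\nabla\eta_n\|_{L^2}\,\|\psi\|_{L^\infty}$ if $\psi$ is bounded near $v$, or by an $H^1\times L^\infty$-type product estimate, giving convergence. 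I would also double-check that the interior mollification needed to upgrade ``$C^\infty$ up to the boundary away from vertices'' to ``$C^\infty_0(\overline\Omega\setminus U)$'' genuinely preserves the MIT condition — it does, because the condition only couples the two components through the locally constant coefficient $C_i$ on each open edge, so convolving tangentially along an edge (with a partition of unity subordinate to the edges) keeps the relation intact, and normal mollification can be handled by a standard reflection/extension argument on each straight edge.
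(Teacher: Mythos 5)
The paper does not actually prove this lemma; it defers to \cite{Tuyen}, so your proposal has to stand on its own. The overall architecture (graph-norm density, reduction to functions smooth away from the vertices, then real scalar cutoffs exploiting that points have zero capacity in two dimensions) is the standard and correct strategy, and the observation that scalar cutoffs preserve the piecewise-constant MIT condition is right. But the decisive estimate --- the commutator term near a vertex --- is not closed. Your claim that $\|\psi\|_{L^2(B(v,r))}^2=o(r^2)$ for $\psi\in L^2$ is false (take $\psi(x)=|x-v|^{-1+\eps}$ with small $\eps>0$: it lies in $L^2$ near $v$ but $\int_{B(v,r)}|\psi|^2\sim r^{2\eps}$), so the plain cutoff with $|\nabla\eta_n|\sim n$ gives only boundedness, not smallness, of $\|\,|\nabla\eta_n|\,|\psi|\,\|_{L^2}$, as you yourself note. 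Your proposed fix via logarithmic cutoffs then requires $\psi\in L^\infty$ near the vertex, which you never establish and which does not follow from $\Dom(T)\subset H^1(\Omega)$ in two dimensions ($H^1\not\hookrightarrow L^\infty$). The gap is fixable: either invoke the two-dimensional logarithmic Hardy inequality $\int_{B}\frac{|u|^2}{|x|^2\log^2|x|}\,dx\le C\|u\|_{H^1(B)}^2$, which for the cutoff interpolating between radii $e^{-n}$ and $1/n$ gives $\int|\nabla\eta_n|^2|\psi|^2\le\int_{\{e^{-n}<|x-v|<1/n\}}\frac{|\psi|^2}{|x-v|^2\log^2|x-v|}\to0$ for \emph{every} $\psi\in H^1$ with no boundedness assumption; or first reduce to finite linear combinations of eigenfunctions (a graph-norm core since the spectrum is discrete) and prove those are bounded near the corners, which would need the corner asymptotics of \cite{LeTreust-Ourmieres-Bonafos_2018} rather than anything proved in this paper. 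Neither step is actually carried out in your write-up.

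A secondary soft spot: your first step asserts that elements of $\bigcap_k\Dom(T^k)$ are $C^\infty$ up to the boundary away from the vertices, citing Theorem~\ref{ReConvex}; that theorem only yields $H^2$ outside a corner neighbourhood, so you need, and should state, a bootstrap to all orders before Sobolev embedding gives smoothness. Using the discreteness of the spectrum to replace $\chi_n(T)\psi$ by finite sums of eigenfunctions makes this step cleaner, and there is no circularity with the paper's logic (Theorem~\ref{Repoly} uses the lemma, Theorem~\ref{ReConvex} does not), but the higher-order regularity away from the corners is not free and is only gestured at.
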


\begin{Lemma}\label{square-operator}
 Let $\Omega$ be a two-dimensional polygon and $(T, \Dom(T))$ is the Dirac operator subject to infinite mass boundary conditions then $\forall \,u\in \Dom(T)$,
\begin{equation*}  
  \|T u\|^2   = \|\nabla u\|^2_{\sii(\Omega)} + m^2 \|u\|^2_{\sii(\Omega)} + m \, \|\gamma u\|^2_{\sii ( \partial (\Omega)) } .
\end{equation*}
\end{Lemma}
The proofs can be found in \cite{Tuyen}. 
By virtue of Lemma \ref{L2}, we can find a sequence $\{u_{n}\}_{n\geq1} \subset C^\infty_0(\overline{\Omega}\setminus \{U\}) \cap \Dom(T)$ such that $u_{n} \xrightarrow{n \rightarrow \infty} u$ in $H^1(\Omega, \C^2)$.

\begin{Proposition}\label{Repoly}
Let $\Omega$ be a convex polygon in $\mathbb{R}^2$ and $u$ is an eigenfunction corresponding to an eigenvalue $\lambda$ of $T$ such that there exists $ C^\infty_0(\overline{\Omega}\setminus \{U\}) \cap \Dom(T)\ni u_{n} \xrightarrow{n \rightarrow \infty} u$ in $H^1(\Omega, \C^2)$,
 $\{\partial_1(\partial_1 + i \partial_2) u_{1n}\}_{n\geq1}$, $\{\partial_1(\partial_1 - i \partial_2) u_{2n}\}_{n\geq1}$ weakly converge in $L^2(\Omega)$  then $u  \in  H^2(\Omega)$.

\end{Proposition}
\begin{proof}
Recall that $u \in H^1(\Omega; \C^2)$ 
\begin{equation}\label{sys5}
\left\{
\begin{aligned}
  -i(\partial_1+i\partial_2)  u_1  &=  (\lambda + m) u_2 \quad && \mbox{in} \quad \Omega \,,\\
  -i(\partial_1-i\partial_2) u_2 &= (\lambda - m) u_1 \quad && \mbox{in} \quad \Omega \,.
\end{aligned}  
\right.
\end{equation}

By dint of Lemma \ref{L2}, we can find a sequence $\{u_{1n}\}_{n\geq1} \in C^\infty_0(\overline{\Omega}\setminus \{U\})$ such that $u_{1n} \xrightarrow{n \rightarrow \infty} u_1$ in $H^1(\Omega, \C)$. Therefore, we have $\partial_1 u_{2n} \xrightarrow{n\rightarrow \infty} \partial_1 u_{2}$ and $\partial_1 u_{1n}\xrightarrow{n\rightarrow \infty} \partial_1 u_{n}$ in $L^2(\Omega)$.

Using the hypotheses, there exists $\tilde{w}, w \in L^2(\Omega)$ such that
\begin{equation}
\begin{aligned}
-i \partial_1(\partial_1 + i \partial_2) u_{1n} - m \partial_1 u_{2n} \xrightharpoonup{n \rightarrow \infty} \tilde{w} \quad \mbox{in} \quad \sii(\Omega),\\
-i \partial_1(\partial_1 - i \partial_2) u_{2n} + m \partial_1 u_{1n} \xrightharpoonup{n \rightarrow \infty} w \quad \mbox{in} \quad \sii(\Omega).\\
\end{aligned}
\end{equation}
As a result, we have $\underset{n \in \mathbb{N}^*}{\sup} \|-i \partial_1(\partial_1 + i \partial_2) u_{1n}- m \partial_1 u_{2n} \|_{\sii(\Omega)} < +\infty$ and $\underset{n \in \mathbb{N}^*}{\sup} \|-i \partial_1(\partial_1 - i \partial_2) u_{2n} + m \partial_1 u_{1n}\|_{\sii(\Omega)} < +\infty$ due to uniform boundedness principle. 
To simplify formulae , we denote by $\|\cdot\|$ 
 the norm in $\sii(\Omega)$. 
 Since $u_n = (u_{1n}, u_{2n})^T \in \Dom(T)$ 
 then it follows that $\partial_1 u_n \in \Dom(T)$. 
 Thus,
 we obtain $\underset{n \in \mathbb{N}^*}{\sup}\|T \partial_1 u_n\| < +\infty$.
 
 Applying the formula of the operator square as described in 
 Lemma~\ref{square-operator}, 
 we have 
\begin{equation}
\begin{aligned}
\|T\partial_1 u_n\|^2 &= \|\partial_1^2 u_{1n}\|^2+ \|\partial_1\partial_2 u_{1n}\|^2 + \|\partial_1^2 u_{2n}\|^2+ \|\partial_1\partial_2 u_{2n}\|^2 + m^2 \|\partial_1 u_n\|^2 + m \|\partial_1 u_n \|^2_{L^2(\partial\Omega)} \,.
\end{aligned}
\end{equation}
Therefore, we obtain 
$$\underset{n \in \mathbb{N}^*}{\sup}\|\partial_1^2 u_{1n}\|^2 < +\infty \quad \mbox{and} \quad \underset{n \in \mathbb{N}^*}{\sup}\|\partial_1\partial_2 u_{1n}\|^2 < + \infty \,,$$
and
$$\underset{n \in \mathbb{N}^*}{\sup}\|\partial_1^2 u_{2n}\|^2 < +\infty \quad \mbox{and} \quad \underset{n \in \mathbb{N}^*}{\sup}\|\partial_1\partial_2 u_{2n}\|^2 < + \infty \,.$$

As a result, there exists sub-sequences convergent in $\sii(\Omega)$.
Taking a sub-sequence $\{u_{1n_k}\}_{k \geq 1} \subseteq \{u_{1n}\}_{n\geq 1} $ such that there exists $v \in \sii(\Omega)$ satisfying
$$ \partial_1\partial_2 u_{n_k} \xrightharpoonup{n \rightarrow \infty} v \in \sii(\Omega) ,$$
we have 
\begin{equation}
\begin{aligned}
(\partial_2 u_1, \partial_1 \phi) &= \underset{n_k\rightarrow +\infty}{\lim} \int_\Omega \overline{\partial_2 u_{1n_k}} \partial_1 \phi \, ,\\
&= - \underset{n_k\rightarrow +\infty}{\lim} \int_\Omega \partial_1\partial_2 \overline{u_{1n_k}} \phi \, ,\\
&= (-v,\phi) \,,
\end{aligned}
\end{equation}
valid for all $\phi \in C^\infty_0 (\Omega)$ and thus, we deduce that $\partial_1\partial_2 u_1 \in \sii(\Omega)$.

Employing the analogous arguments , we obtain that 
$\partial_1^2 u_1 \in \sii(\Omega)$. Moreover, $\Delta u_1 = -(\lambda^2 - m^2)u_1 \in H^1(\Omega)$. Therefore,
$\partial_2^2 u_1 \in \sii(\Omega)$ and thus,   $u_1 \in H^2(\Omega)$.
Analogously, we also have $u_2 \in H^2(\Omega)$. This concludes the proof.
\end{proof}

\begin{Remark}
Theorem \ref{Repoly} remains valid for non-convex polygons 
as long as the eigenfunctions of the Dirac operator 
  are in $H^1(\Omega)$. It is remarkable that there is no harmonic eigenfunction of the Dirac 
 operator under infinite-mass boundary conditions due to the fact that the square norm of the operator acting on any eigenfunction is positive.

\end{Remark}
 Furthermore, we would like to study on the existence of $H^2$-smooth eigenfunctions of the Dirac operators. Let us consider the Dirac problem, subject to Dirichlet boundary conditions
\begin{equation}\label{operator1''}
\begin{aligned}
  G &:= 
  \begin{pmatrix}
    m & -i (\partial_1-i\partial_2) \\
    -i(\partial_1+i\partial_2) & -m
  \end{pmatrix}
  \qquad \mbox{in} \qquad
  \sii(\Omega;\C^2)
  \,,\\
  \Dom(G) &:=  
  \left\{
  \psi = 
  \begin{psmallmatrix}
  \psi_1 \\ \psi_2 
  \end{psmallmatrix}
  \in H^1(\Omega;\C^2) : \ \psi_2 =  \psi_1 = 0
  \mbox{ on } \partial\Omega \,
  \right\}
   = H^1_0(\Omega; \C^2) 
  .
  \end{aligned}
\end{equation}
Due to $\Dom(G) \subsetneq \Dom(G^*)$, it follows that $ (G, \Dom(G))$ is  symmetric but not self-adjoint and if 
 $(\lambda,\psi)$ is an eigenvalue and eigenfunction pair of $(G, \Dom(G))$ then $(\lambda,\psi)$ is also an eigenpair of $(T, \Dom(T))$. Rigorously, showing the existence of the point spectrum of $G$ is not obvious, even if the domain is  smoother. Indeed, let us take a $C^\infty$-smooth domain, $F$ and a non-self-adjoint operator 
\begin{equation}
\begin{aligned}
  H_{F} :&= \begin{pmatrix}
    m & -i (\partial_1-i\partial_2) \\
    -i(\partial_1+i\partial_2) & -m
  \end{pmatrix}
  \qquad \mbox{in} \qquad
  \sii(F;\C^2)
  \,, \quad\\
  \Dom(H_{F}) & =
  \left\{
  \psi = 
  \begin{psmallmatrix}
  \psi_1 \\ \psi_2 
  \end{psmallmatrix}
  \in H_0^1(F;\C^2) \
  \right\}
  .
  \end{aligned}
\end{equation}
 Besides, the MIT bag models defined on $F$ act as
\begin{equation}
\begin{aligned}
  T_{F} &:= \begin{pmatrix}
    m & -i (\partial_1-i\partial_2) \\
    -i(\partial_1+i\partial_2) & -m
  \end{pmatrix}
  \qquad \mbox{in} \qquad
  \sii(F;\C^2)
  \,, \quad\\
  \Dom(T_{F}) & =
  \left\{
  \psi = 
  \begin{psmallmatrix}
  \psi_1 \\ \psi_2 
  \end{psmallmatrix}
  \in H^1(F;\C^2) : \ \psi_2 = i (n_1 + i n_2) \psi_1
  \mbox{ on } \partial\Omega
  \right\}
  .
  \end{aligned}
\end{equation}
It can be seen that $\Dom(H_F) \subset \Dom(T_F)$ then the point spectrum of $H_F$ is included in that of $T_F$. Moreover, in the case of $m=0, F = \mathbb{D}$-the unit disk in $\Real^2$, the eigenvalues $\lambda_n$ and eigenfunctions $u_n, n \in \mathbb{Z}$ of $T_{\mathbb{D}}$ are explicitly computed in polar coordinates \cite{HVD} :
\begin{equation}
u_n(r, \varphi)= \begin{pmatrix}
&e^{i n \varphi} J_n(|\lambda_n| r) \\
&i \sgn(\lambda_n) e^{i(n+1)\varphi} J_{n+1}(|\lambda_n| r)
\end{pmatrix} \,,
\end{equation}
where $J_n, J_{n+1}$ are the Bessel functions of order $n$ of first kind. However, $J_n$ and $ J_{n+1}$ have no common zeros, other than the origin due to the classical result \cite{Watson}. Therefore, $u_n$ does not vanish on the boundary of $\mathbb{D}$. Equivalently, $u_n$ is not an eigenfunction of $(H_\mathbb{D}, \Dom(H_\mathbb{D}))$ with $ n \in \mathbb{Z}, m=0$.


...


\section{Regularity of the three-dimensional Dirac eigenfunctions}\label{Sec.4}

In this section, we consider the Dirac operators with MIT bag boundary conditions in three-dimensional $C^2$-smooth domains. Due to \cite{J.B}, the operators are self-adjoint.

To set the stage, let $\Omega$ be a domain with $C^2$-smooth boundary, we recall the definition of the operator defined 
in $L^2(\Omega; \mathbb{C}^4)$ 
\begin{equation} \label{def_MIT_op}
  \begin{aligned}
    T u &:= (-i \alpha \cdot \nabla + m \beta) u, \\
    \Dom(T) &:= \bigl\{ u \in H^1(\Omega; \mathbb{C}^4): 
        u|_{\partial \Omega} = -i \beta (\alpha \cdot n) u|_{\partial \Omega} \bigr\}.
  \end{aligned}
\end{equation}
and some elementary properties, see \cite{Arrizibalaga-LeTreust-Raymond_2018}:
For all $x,y \in \mathbb{R}^3$, $\eta :=\begin{pmatrix} 0 & I_2 \\ I_2 & 0 
\end{pmatrix},$ we have
\begin{equation}
\begin{aligned}
(\alpha x)\, (\alpha y)&= (x.y)1_4 + i \eta \, \alpha (x \times y)\\
\beta(\alpha x) &= - (\alpha x) \beta.
\end{aligned}
\end{equation}
Let $u \in H^1(\Omega; \C^4)$ be an eigenfunction  with respect to an eigenvalue $\lambda$ of $(T,\Dom(T))$.
 In view of
the properties of the Dirac matrices,  we have 
\begin{equation}
\begin{aligned}
&Tu = \lambda u \, ,\\
&\Longrightarrow (-i \alpha.\nabla + m \beta) u = \lambda I_4 u \, ,\\
&\Longrightarrow (-i \alpha.\nabla + m \beta)(-i \alpha.\nabla + m \beta) u = \lambda^2 I_4 u \, ,\\
&\Longrightarrow -\Delta u = (\lambda^2 - m^2) u \, .\\
\end{aligned}
\end{equation}
\\
The eigenfunction satisfies the following system:
\begin{equation}\label{system3} 
\left\{
\begin{aligned}
(\partial_1-i\partial_2) u_4 + \partial_3 u_3 &= i(\lambda -m) u_1 \quad && \mbox{in} \quad \Omega \,,\\
(\partial_1+i\partial_2) u_3 - \partial_3 u_4 &= i(\lambda -m) u_2 \quad && \mbox{in} \quad \Omega \,,\\
(\partial_1-i\partial_2) u_2 + \partial_3 u_1 &= i(\lambda +m) u_3 \quad && \mbox{in} \quad \Omega \,,\\
(\partial_1+i\partial_2) u_1 - \partial_3 u_2 &= i(\lambda +m) u_4 \quad && \mbox{in} \quad \Omega \,,\\
\end{aligned}  
\right.
\end{equation}
subject to boundary conditions,

\begin{equation}\label{system4} 
\left\{
\begin{aligned}
(-i n_1 - n_2) u_4 - i n_3 u_3 &=  u_1 \quad && \mbox{in} \quad \partial\Omega \,,\\
(-i n_1 + n_2) u_3 + i n_3 u_4 &= u_2 \quad && \mbox{in} \quad \partial\Omega \,,\\
(i n_1 + n_2) u_2 + i n_3 u_1 &=  u_3 \quad && \mbox{in} \quad \partial\Omega \,,\\
(i n_1 -n_2) u_1 - i n_3 u_2 &= u_4 \quad && \mbox{in} \quad \partial\Omega \,.\\
\end{aligned}  
\right.
\end{equation}

If $\Omega$ is a whole space $\mathbb{R}^3$ then the eigenfunctions are in
 $C^\infty(\mathbb{R}^3)$ by means of using the Fourier transform. Nevertheless, it is noticeable that the point spectrum of the operator is empty and the spectrum of $T$ is purely essential, $\sigma^{\mathbb{R}^3}(T) = \sigma_{\textup{ess}}^{\mathbb{R}^3}(T)=(- \infty, -m ] \cup [m, \infty)$, see \cite{T92}.

Let $V$ be in $C^\infty_0(\Omega; \mathbb{R})$ and consider an addition potential problem
 \begin{equation} \label{def_MIT_op2}
  \begin{aligned}
    D u &:= (-i \alpha \cdot \nabla + m \beta + V I_4) u, \\
    \Dom(D) &:= \bigl\{ u \in H^1(\Omega; \mathbb{C}^4): 
        u|_{\partial \Omega} = -i \beta (\alpha \cdot n) u|_{\partial \Omega} \bigr\}.
  \end{aligned}
\end{equation}
 \begin{Lemma}\label{Lemma-self}
 Let $\Omega$ be a $C^2$-smooth domain in $\mathbb{R}^3$, $V \in C^\infty_0(\Omega; \mathbb{R})$  then $(D, \Dom(D))$ is self-adjoint. In addition, if $\Omega$ is bounded, half-space or $\mathbb{R}^3$, then
  $(D, \Dom(D))$ and $(T, \Dom(T))$ have the same essential spectrum $\sigma_{\textup{ess}}^{\Omega}(D)=\sigma_{\textup{ess}}^{\Omega}(T)$. In particular, when $\Omega= \mathbb{R}^3$, $\sigma_{\textup{ess}}^{\mathbb{R}^3}(D)=(- \infty, -m ] \cup [m, \infty)$.
 
 \end{Lemma}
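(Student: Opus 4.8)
\textbf{Proof plan for Lemma \ref{Lemma-self}.}

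The plan is to split the statement into two parts: self-adjointness of $(D,\Dom(D))$ and the equality of essential spectra, treating the two with different tools. For the self-adjointness, I would start from the known self-adjointness of the free MIT operator $(T,\Dom(T))$ on a $C^2$-smooth domain, established in \cite{J.B}, and observe that $D = T + V I_4$ with $V\in C^\infty_0(\Omega;\Real)$ acting on the \emph{same} operator domain $\Dom(D)=\Dom(T)$. Since $V$ is real-valued and bounded (being smooth with compact support), multiplication by $V I_4$ is a bounded self-adjoint operator on $L^2(\Omega;\C^4)$; hence by the Kato--Rellich theorem $(D,\Dom(T))$ is self-adjoint. This is the routine part and I would keep it to one or two sentences.

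For the essential spectrum, the key idea is that $V$ has compact support, so $D - T = V I_4$ should be relatively compact with respect to $T$, and then Weyl's theorem on the stability of the essential spectrum gives $\sigma_{\textup{ess}}^{\Omega}(D)=\sigma_{\textup{ess}}^{\Omega}(T)$. Concretely, I would fix $z$ in the resolvent set of both operators (e.g. $z=i$) and show that $V I_4 (T-z)^{-1}$ is compact on $L^2(\Omega;\C^4)$: for $f\in L^2$, $(T-z)^{-1}f\in \Dom(T)\subset H^1(\Omega;\C^4)$ with norm controlled by $\|f\|$, and multiplication by the compactly supported $V$ maps $H^1(\Omega)$ into $H^1$ functions supported in the fixed compact set $K:=\supp V$; the embedding $H^1(K)\hookrightarrow L^2(K)$ is compact by Rellich--Kondrachov, so the composition $f\mapsto V(T-z)^{-1}f$ is compact. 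The resolvent identity then shows $(D-z)^{-1}-(T-z)^{-1}$ is compact, and Weyl's essential-spectrum theorem yields the claimed equality, valid for $\Omega$ bounded, half-space, or all of $\Real^3$. Finally, for $\Omega=\Real^3$ I would invoke the already-recorded fact that $\sigma^{\Real^3}(T)=\sigma_{\textup{ess}}^{\Real^3}(T)=(-\infty,-m]\cup[m,\infty)$ (from \cite{T92}), so the equality of essential spectra immediately gives $\sigma_{\textup{ess}}^{\Real^3}(D)=(-\infty,-m]\cup[m,\infty)$.

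The main obstacle is the compactness argument for $V I_4(T-z)^{-1}$: one must be careful that the range of $(T-z)^{-1}$ genuinely sits in $H^1(\Omega;\C^4)$ with a uniform bound — this is exactly the statement that $\Dom(T)\subset H^1$ with graph-norm control, which holds precisely because the MIT operator on a $C^2$ domain has operator domain contained in $H^1$ (this is where the $C^2$-smoothness hypothesis is genuinely used). Once that $H^1$ regularity of the domain is in hand, localizing to $K=\supp V$ and applying Rellich's compact embedding is standard. A minor additional care is needed for the half-space case, where $K$ is compact but $\Omega$ is unbounded; however, since $V$ is compactly supported inside $\Omega$, one works on a bounded Lipschitz subdomain containing $K$ and the compact embedding still applies, so no essential difficulty arises there. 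I would also remark that the equality of essential spectra does \emph{not} require $V\geq 0$ or any sign condition, only compact support and boundedness.
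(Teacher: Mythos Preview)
Your proposal is correct and follows essentially the same strategy as the paper: self-adjointness from the fact that $VI_4$ is a bounded symmetric perturbation of the self-adjoint $T$, and stability of the essential spectrum via Weyl's theorem after exhibiting the relative compactness of $D-T=VI_4$. The only cosmetic differences are that you invoke Kato--Rellich by name whereas the paper verifies $\Dom(D^*)=\Dom(D)$ by hand, and you spell out the Rellich--Kondrachov step for compactness more carefully than the paper, which simply asserts that compact support of $V$ makes $VI_4$ compact from $\Dom(T)$ to $L^2$.
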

\begin{proof}
 The self-adjointness of $(D, \Dom(D))$ can be derived from the self-adjointness of $(T, \Dom(T))$ due to $ \Dom(D) = \Dom(T)$.
Indeed, it is obvious that $ (D, \Dom(D))$ is symmetric due to $(T, \Dom(T))$ and $ (V I_4, \Dom(D))$ are symmetric.
Take $\psi \in \Dom(D^*)$, for every $\phi \in \Dom(D) = \Dom(T)$, there exists a function $\eta \in L^2(\Omega; \mathbb{C}^2)$ such that 
\begin{equation}
( \psi, D \phi) = ( D \psi, \eta) \,.
\end{equation}
Equivalently, one has
\begin{equation}
(\psi, T \phi) + ( \psi, V I_4 \phi) = ( T \psi + V I_4 \psi, \eta) \,, 
\end{equation}
it deduces that 
\begin{equation}
( \psi, T \phi) = ( T \psi, \eta) \,.
\end{equation}
Therefore, $ \psi \in \Dom(T^*) = \Dom(T)$ and thus, $\Dom(D^*)= \Dom(D)$. It shows that $(D, \Dom(D))$ is self-adjoint.

Moreover,  if $\Omega$ is bounded or half-space then the trace map $\gamma: H^1(\Omega) \rightarrow H^\frac{1}{2}(\partial\Omega)$ is continuous. It deduces the closeness of the graph of  the following operator 
 \begin{equation} \label{def_MIT_op3}
  \begin{aligned}
  H  &: \Dom(H)\subset H^1(\Omega) \longrightarrow L^2(\Omega)\\
     &   \quad\quad \quad  u \longmapsto Hu = V I_4 u, \\
    \Dom(H) &:= \Dom(T) = \bigl\{ u \in H^1(\Omega; \mathbb{C}^4): 
        u|_{\partial \Omega} = -i \beta (\alpha \cdot n) u|_{\partial \Omega} \bigr\}.
  \end{aligned}
\end{equation}
Since the support of $V$ is compact, one deduces that $H$ is a compact operator or $D-T$ is compact. Therefore, $(D, \Dom(D))$ and $(T, \Dom(T))$ have the same essential spectrum due to the classical Weyl theorem \cite{Reed}.
In particular, one has $\sigma_{\textup{ess}}^{\mathbb{R}^3}(D)= \sigma_{\textup{ess}}^{\mathbb{R}^3}(T)=(- \infty, -m ] \cup [m, \infty)$. The proof of the Lemma is completed.

\end{proof}

\begin{Theorem}\label{Regularity hs}
Let $\Omega$ be any half-space domain in $\mathbb{R}^3$ then $(D, \Dom(D))$ is self-adjoint and if $u$ is 
an eigenfunction of $(D, \Dom(D))$ then $u \in C^\infty(\overline{\Omega})$. 
\end{Theorem}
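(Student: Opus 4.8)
The strategy is to reduce the three-dimensional MIT eigenvalue problem on the half-space to a Dirichlet (or Neumann) elliptic problem for the individual components, exactly mirroring the two-dimensional argument of Theorem~\ref{Th1}, and then to bootstrap via elliptic regularity (Theorem~\ref{D2}) up to $C^\infty(\overline{\Omega})$. The self-adjointness of $(D,\Dom(D))$ is already supplied by Lemma~\ref{Lemma-self} (the case of half-space being explicitly covered there), so that an eigenfunction $u$ exists in $H^1(\Omega;\C^4)$ and, because $D-T$ only adds a smooth compactly supported potential and $\lambda^2$-type identities still hold with a lower-order smooth term, satisfies a system of the shape \eqref{system3}--\eqref{system4} with $\lambda$ replaced by the potential-perturbed right-hand sides. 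The key structural fact to exploit is that for a half-space $\Omega = \{x_3 > 0\}$ (or after a rigid motion), the outward normal $n$ is a \emph{constant} vector, so the coefficients $(-in_1-n_2), -in_3$, etc., appearing in \eqref{system4} are constants; consequently the boundary relations express $u_1,u_2$ as fixed linear combinations of $u_3,u_4$ on $\partial\Omega$ (and conversely), with no need to introduce an extension $\psi$ of a nonconstant normal field.

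\textbf{Key steps, in order.} First, I would record that $u\in H^1$ solves $-\Delta u = (\lambda^2-m^2)u - (\text{lower order smooth terms in }V)u$ in $\Omega$, obtained by applying $D$ twice and using the Clifford relations $\alpha_j\alpha_k+\alpha_k\alpha_j = 2\delta_{jk}$, $\alpha_j\beta+\beta\alpha_j=0$; the right-hand side is in $L^2(\Omega)$ since $V\in C^\infty_0$. Second, split $u$ into its four scalar components $u_1,\dots,u_4$; on $\partial\Omega$ the constant-coefficient relations \eqref{system4} show, say, that $u_1$ and $u_2$ equal explicit constant-coefficient linear combinations of $u_3$ and $u_4$, hence the boundary data of each $u_j$ lies in the trace space of the others. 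Third, apply the Dirichlet elliptic regularity of Theorem~\ref{D2} (valid for $\Omega=\mathbb{R}^N_+$): since $u_j\in H^1$ has trace in $H^{1/2}(\partial\Omega)$ and $-\Delta u_j \in L^2$, one does \emph{not} immediately get $H^2$ from $H^{1/2}$ data; instead I would use the coupled structure — the first-order system \eqref{system3} gives $\partial_k u_j$ in terms of the other components' first derivatives, so that $(\partial_1\pm i\partial_2)(\cdot)$ and $\partial_3(\cdot)$ of any component lie in $H^1$ once the others do, and combined with $-\Delta u_j\in L^2$ and the constant boundary relations one upgrades all components simultaneously to $H^2$. Fourth, iterate: having $u\in H^2$, the first-order system shows the right-hand sides and boundary data are now one Sobolev order smoother, so Theorem~\ref{D2} (the $C^{m+2}$/$H^m$ part, trivially satisfied by the flat boundary and $f\in H^m$ since $V$ is smooth) gives $u\in H^{m}$ for every $m$; Sobolev embedding then yields $u\in C^\infty(\overline{\Omega})$.

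\textbf{Main obstacle.} The delicate point is the \emph{first} jump from $H^1$ to $H^2$: unlike the scalar Dirichlet problem where $H^{3/2}$ boundary data is needed, here the boundary data of each component is a priori only $H^{1/2}$, so one cannot invoke \eqref{Dirichlet0} directly. The resolution — as in Theorem~\ref{Th1} — is that the \emph{coupling} of the first-order Dirac system with the constant boundary relations lets one run the bootstrap on the first-order system itself: knowing $(\partial_1\pm i\partial_2)(\varphi u_j)\in H^1$ and $\partial_3(\varphi u_j)\in H^1$ for localizing cutoffs $\varphi$ forces $\varphi u_j\in H^2$, and the constant normal makes the reflection/extension across the flat boundary transparent, so no genuinely new analytic difficulty arises beyond carefully organizing the simultaneous induction on all four components. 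The only care needed is that $V$, being smooth and compactly supported inside $\Omega$, never affects the boundary relations, so the boundary bootstrap is unobstructed and the interior bootstrap absorbs the $Vu$ term at each stage.
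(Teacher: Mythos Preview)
Your proposal has a genuine gap at precisely the step you flag as the main obstacle, the jump from $H^1$ to $H^2$. Your proposed resolution is circular: you write that the first-order system ``gives $\partial_k u_j$ in terms of the other components' first derivatives, so that $(\partial_1\pm i\partial_2)(\cdot)$ and $\partial_3(\cdot)$ of any component lie in $H^1$ once the others do,'' and then appeal to a ``simultaneous'' upgrade. But in three dimensions each equation of \eqref{system3} couples first derivatives of \emph{two different} components, e.g.\ $(\partial_1-i\partial_2)u_4+\partial_3 u_3\in H^1$; knowing the right-hand side is in $H^1$ does not yield any single $\partial_k u_j\in H^1$. This is a genuine difference from the two-dimensional argument of Theorem~\ref{Th1}, where each first-order equation involves derivatives of one component only, so that e.g.\ $(\partial_1+i\partial_2)u_1\in H^1$ is immediate from $u_2\in H^1$. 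Your appeal to cutoffs $\varphi$ and to reflection across the flat boundary does not address this coupling.

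The paper breaks the circularity by a different mechanism that you do not mention. First, it singles out the specific linear combinations $u_1-iu_3$ and $u_2+iu_4$, which vanish identically on $\partial\Omega$ by \eqref{system4'}; Dirichlet regularity (Theorem~\ref{D2}) then puts these combinations in $H^p$ for every $p$. Second, by adding and subtracting the first-order equations as in \eqref{system3'}, one isolates $\partial_3(u_1+iu_3)$ and $\partial_3(u_2-iu_4)$ as $H^1$ quantities, and together with the Dirichlet step this yields $\partial_3 u_j\in H^1$ for all four components, hence $\partial u_j/\partial n\in H^{1/2}(\partial\Omega)$. Third, and crucially, the paper invokes \emph{Neumann} elliptic regularity on the half-space (Theorem~\ref{Th6}, established expressly for this purpose) to conclude $u_j\in H^2$. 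The induction then proceeds by repeating this Dirichlet-then-Neumann cycle at each order. Your sketch, relying on Dirichlet regularity alone and a vague reflection argument, does not supply this missing ingredient.
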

There are perhaps alternative approaches to prove the regularity result, such as employing reflections or extension maps. However, it is definitely non-trivial for non-zero mass cases due to the fact that the normal extension based on the boundary conditions can not satisfy the equations of the eigenfunction, even it still holds for $m=0$.

In this paper, we demonstrate the desired result by establishing the elliptic regularity, 
we recall the  Neumann problem
\begin{equation}\label{N3} 
\left\{
\begin{aligned}
  -\Delta \psi +  \psi &= f
  && \mbox{in} && \Omega 
  \,,
  \\
  \frac{\partial\psi}{\partial n} &= g 
  && \mbox{on} && \partial\Omega 
  \,,
\end{aligned}
\right.
\end{equation}
where $\Omega$ is a three-dimensional domain, 
    $f$ belongs to $L^2(\Omega)$ and $g \in H^\frac{1}{2}(\partial\Omega)$.
We have already mentioned
that if $\Omega$ is a bounded regular domain then $\psi \in H^2(\Omega)$, see \cite{Grisvard}. Now we prove that the result still holds for half-space domains.  


\begin{Lemma}\label{Th6}
Let $\Omega$ be any half space domain in $\mathbb{R}^3$, $ f \in L^2(\Omega)$ and $g \in H^\frac{1}{2}(\partial\Omega)$, system \eqref{N3} has a unique solution in $H^1(\Omega)$ and then $\psi \in H^2(\Omega)$.
\end{Lemma}
\begin{proof}
We prove the result in the case $\Omega = \mathbb{R}^2\times(0, \infty)$. For other half-space domains, the result can be obtained by using the same argument.
Since $\psi \in H^1(\Omega)$ satisfying system \eqref{N3} then 
%
\begin{equation}\label{V1}
\int_\Omega \overline{\nabla \psi} \nabla \phi + \int_\Omega \overline{\psi} \phi = \int_\Omega \overline{f} \phi + \int_{\partial\Omega} \overline{g} \phi \quad \quad \forall \, \phi \in H^1(\Omega).
\end{equation}
Due to the trace theorem, there exists a unique continuous trace map 
  $H^1(\Omega) \rightarrow H^\frac{1}{2}(\partial\Omega)$ \cite[Sec. 1.5]{Grisvard} 
and   
a constant $C$ such that\\
\begin{equation*}
\|\phi\|_{H^\frac{1}{2}(\partial\Omega)} \leq C \|\phi\|_{H^1(\Omega)} \quad \forall  \phi \in H^1(\Omega).
\end{equation*}
As a result, we can deduce that problem \eqref{V1} 
has unique solution in $H^1(\Omega)$ due to the Lax-Milgram theorem.

On the other hand, there also exists a right continuous inverse of the trace map 
such that for any $g \in H^\frac{1}{2}(\partial\Omega)$, there is an extension $g_1 \in H^2(\Omega)$ such that $\frac{\partial g_1}{\partial n} = g $ on $\partial\Omega$.  Putting $ u:= \psi - g_1$ then $u$ satisfies
\begin{equation}\label{N4} 
\left\{
\begin{aligned}
  -\Delta u +  u &= f - g_1 - \Delta g_1 
  && \mbox{in} && \Omega 
  \,,
  \\
  \frac{\partial u}{\partial n} &= 0 
  && \mbox{on} && \partial\Omega 
  \,.
\end{aligned}
\right.
\end{equation}
Employing the Neumann regularity presented in Theorem \ref{Neumann}, we obtain that $u \in H^2(\Omega)$ and thus we also have $\psi \in H^2(\Omega)$. The proof has been completed.
\end{proof}

Now we are in a position to prove Theorem \ref{Regularity hs}.
\begin{proof}[Proof of Theorem~\ref{Regularity hs}] 
The self-adjointness can directly be deduced from Lemma \ref{Lemma-self}.

We still consider $\Omega = \mathbb{R}^2\times (0,\infty)$, 
let $u$ be an eigenfunction  corresponding to an eigenvalue $\lambda$ of $(D,\Dom(D))$ then
system  \eqref{system4} reads,
\begin{equation}\label{system4'} 
\left\{
\begin{aligned}
 i u_3 &=  u_1 \quad && \mbox{on} \quad  \mathbb{R}^2\times \{0\} \,,\\
-i  u_4 &= u_2 \quad && \mbox{on} \quad \mathbb{R}^2\times \{0\} \,,\\
-i u_1 &=  u_3 \quad && \mbox{on} \quad  \mathbb{R}^2\times \{0\} \,,\\
 i  u_2 &= u_4 \quad && \mbox{on} \quad \mathbb{R}^2\times \{0\} \,.\\
\end{aligned}  
\right.
\end{equation}
Using the regularity  stated in Theorem \ref{D2}, we obtain that
\begin{equation}\label{B1}
\begin{aligned}
 u_1 -i u_3 \in H^p (\mathbb{R}^2\times (0,\infty)) \,\forall \,p \in \mathbb{N} ,\\ 
 u_2 + i u_4 \in H^p (\mathbb{R}^2\times (0,\infty)) \,\forall \, p \in \mathbb{N} .
 \end{aligned}
 \end{equation}
Moreover, we have
\begin{equation}\label{system3*} 
\left\{
\begin{aligned}
(\partial_1-i\partial_2) u_4 + \partial_3 u_3 &= i(\lambda -m - V)  u_1 \quad && \mbox{in} \quad \Omega \,,\\
(\partial_1+i\partial_2) u_3 - \partial_3 u_4 &= i(\lambda -m - V) u_2 \quad && \mbox{in} \quad \Omega \,,\\
(\partial_1-i\partial_2) u_2 + \partial_3 u_1 &= i(\lambda +m - V) u_3 \quad && \mbox{in} \quad \Omega \,,\\
(\partial_1+i\partial_2) u_1 - \partial_3 u_2 &= i(\lambda +m -V) u_4 \quad && \mbox{in} \quad \Omega \,.\\
\end{aligned}  
\right.
\end{equation}
Reformulating partly system \eqref{system3*}, we have
\begin{equation}\label{system3'} 
\left\{
\begin{aligned}
(\partial_1-i\partial_2)(u_2+ i u_4) + \partial_3 (u_1 + i u_3) &= i(\lambda +m -V)u_3 - (\lambda -m - V)  u_1 \quad && \mbox{in} \quad \Omega \,,\\
(\partial_1+i\partial_2) (u_1 -i u_3) - \partial_3 (u_2 - i u_4) &= i(\lambda +m -V) u_4+ (\lambda -m - V) u_2 \quad && \mbox{in} \quad \Omega \,.\\
\end{aligned}  
\right.
\end{equation}
As a result, one has
$$ \partial_3(u_1 + i u_3) \in H^1(\Omega) \, ,$$
$$ \partial_3(u_2 - i u_4) \in H^1(\Omega) \, .$$
Taking \eqref{B1} into account, we deduce that $\partial_3 u_i \in H^1(\Omega)$ for every $i= 1,2,3$.

Applying the trace theorem, we have $\partial_3 u_i \in H^\frac{1}{2}(\partial\Omega)$. Moreover,
the outward unit normal of $\Omega$  reads $ n = (0,0, -1)$ and thus $\frac{\partial u_i}{\partial n} \in H^\frac{1}{2}(\partial\Omega) $, for every $ i =1,2, 3$.
Hence, $ u_i$ satisfies 

\begin{equation}\label{N5} 
\left\{
\begin{aligned}
  -\Delta u_i &= ((\lambda- V)^2 -m^2) u_i 
  && \mbox{in} && \Omega 
  \,,
  \\
  \frac{\partial u_i}{\partial n} & \in H^\frac{1}{2}(\partial\Omega) 
  \,.
\end{aligned}
\right.
\end{equation}
%
It is apparent that $u_i \in H^2(\Omega)$ due to Lemma \ref{Regularity hs}.
Thus, we have proved that $u_i \in H^m(\Omega)$ for every $ i= 1, 2, 3$ with $m=2$. Now we use induction on $m$ to show that $u \in H^k (\Omega)$ $\forall \, k \in \mathbb{N}$, we suppose that $u_i \in H^m(\Omega)$ for every $ i= 1, 2, 3$ and $ m \geq 2$.

Denote
$D^k := \partial_1^{\sigma_1} \partial_2^{\sigma_2} \partial_3^{\sigma_3}$ such that $ \sum_{i=1}^3 \sigma_i = k, k, \sigma_i \in \mathbb{N}$. We remark that the notation is purely symbolic as it represents different derivative operators but it does not impact the accuracy of the proof.

Exploiting system \eqref{system3'}, 
we deduce that both sides of the following equations are in $H^1(\Omega)$ and satisfy
\begin{equation}\label{system3''} 
\left\{
\begin{aligned}
(\partial_1-i\partial_2)& D^{m-1} (u_2 + i u_4) + \partial_3 D^{m-1} (u_1 + i u_3) = \,D^{m-1}[i(\lambda +m - V) u_3]- D^{m-1}[ (\lambda -m - V) u_1]  \quad \mbox{in}\quad\Omega \,,\\
(\partial_1+i\partial_2)& D^{m-1}(u_1- i u_3) - \partial_3 D^{m-1} (u_2 - i u_4) = \,D^{m-1}[i(\lambda +m - V)u_4] + D^{m-1}[(\lambda -m - V)  u_2]   \quad \mbox{in}\quad\Omega \,.
\end{aligned}  
\right.
\end{equation}
By virtue of \eqref{B1}, one has
\begin{equation}\label{B2}
\begin{aligned}
 D^k(u_1 -i u_3) \in H^p(\mathbb{R}^2\times (0,\infty))  ,\\ 
 D^k(u_2 + i u_4) \in H^p (\mathbb{R}^2\times (0,\infty)), \quad \,\forall k, p \in \mathbb{N} .
 \end{aligned}
 \end{equation}
By virtue of the inductive hypothesis, \eqref{system3''} and \eqref{B2}, we deduce that 
$$
\partial_3 D^{m-1} u_i \in H^1(\Omega) \, \forall i= 1, 2, 3.\\$$ 
 Using the trace theorem we have 
 \begin{equation}
\begin{aligned}
\partial_3 D^{m-1} u_i \in H^\frac{1}{2}(\partial\Omega) \, \forall i= 1, 2, 3.\\
 \end{aligned}
 \end{equation}
 Therefore, $\{D^{m-1} u_i\}_{i= 1, 2, 3} $ are solutions of the following systems:
\begin{equation}\label{N6} 
\left\{
\begin{aligned}
  -\Delta D^{m-1} u_i &= D^{m-1}[((\lambda- V)^2 -m^2)  u_i]  
  && \mbox{in} && \Omega 
  \,,
  \\
  \frac{\partial D^{m-1} u_i}{\partial n} & \in H^\frac{1}{2}(\partial\Omega) 
  \,.
\end{aligned}
\right.
\end{equation} 
 Taking Lemma \ref{Th6} into account when the right-hand side of the first equation in  \eqref{N6} is presently in $L^2(\Omega)$ , we obtain that $D^{m-1} u_i \in H^2(\Omega) \quad \forall i= 1, 2, 3$, it implies that $u_i \in H^{m+1}(\Omega)$. By induction on $m$, we obtain that $u \in H^k (\Omega)$ $\forall \, k \in \mathbb{N}$ and as a result, $u \in C^\infty (\overline{\Omega})$.
The proof argument remains unchanged for other half-space domains, this concludes the proof.
 \end{proof}
Now, we investigate the spectral problem for half-space domains:
\begin{Proposition}
Let $\Omega$ be any half-space in $\mathbb{R}^3$ then  $\sigma_{\textup{ess}}^{\Omega}(D)=\sigma^{\Omega}(T)$ and
there is no point spectrum of $(T, \Dom(T))$.
\end{Proposition}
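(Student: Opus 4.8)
The plan is to show two things: first that $\sigma(T)=\sigma_{\textup{ess}}(T)$ for the free operator on a half-space (equivalently, $T$ has no eigenvalues), and second that the essential spectrum is stable under the compact perturbation $VI_4$, so that $\sigma_{\textup{ess}}(D)=\sigma_{\textup{ess}}(T)=\sigma(T)$. The second point is already available: Lemma~\ref{Lemma-self} asserts that $D-T$ is compact and hence $\sigma_{\textup{ess}}^\Omega(D)=\sigma_{\textup{ess}}^\Omega(T)$ by Weyl's theorem. So the real content is the absence of point spectrum for $(T,\Dom(T))$ on a half-space.

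For the absence of eigenvalues, I would argue by contradiction, following the mechanism already used in the Remark after Theorem~\ref{Th1} and in the discussion preceding the Proposition. Suppose $u$ is an eigenfunction of $T$ with eigenvalue $\lambda$. Squaring the operator as in Section~\ref{Sec.4} gives $-\Delta u=(\lambda^2-m^2)u$ componentwise, so each component $u_i$ is an $L^2(\Omega)$ eigenfunction of the Neumann (or, after the change of variables diagonalising the boundary conditions as in \eqref{system4'}, Dirichlet) Laplacian on the half-space $\Omega=\mathbb{R}^2\times(0,\infty)$ with eigenvalue $\mu:=\lambda^2-m^2$. But the Laplacian on a half-space — with either Dirichlet or Neumann boundary conditions — has purely absolutely continuous spectrum $[0,\infty)$ and no eigenvalues (a quick Fourier/partial-Fourier argument: Fourier transform in the two tangential variables reduces the problem to $-\psi''=(\mu-|\xi|^2)\psi$ on $(0,\infty)$, whose only $L^2$ solution for each $\xi$ is the zero function). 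Hence $u_i\equiv 0$ for all $i$, contradicting that $u$ is an eigenfunction. This is essentially the point the author attributes to D.~Krej\v{c}i\v{r}\'ik in the Remark after Theorem~\ref{Th1}, now in three dimensions.

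Putting the pieces together: since $(T,\Dom(T))$ has no point spectrum and is self-adjoint, $\sigma(T)=\sigma_{\textup{ess}}(T)$; combined with Lemma~\ref{Lemma-self} this yields $\sigma_{\textup{ess}}^\Omega(D)=\sigma_{\textup{ess}}^\Omega(T)=\sigma^\Omega(T)$, which is the displayed identity. The claim that there is no point spectrum of $(T,\Dom(T))$ is then exactly what was established in the contradiction argument. I would also remark, as a sanity check, that by the spectral description recalled in Section~\ref{Sec.4} one has $\sigma^{\mathbb{R}^3}(T)=(-\infty,-m]\cup[m,\infty)$, and restricting to a half-space cannot create eigenvalues outside this band either, consistent with the conclusion.

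The main obstacle is making the "no eigenfunctions of the half-space Laplacian" step fully rigorous for all four components simultaneously and for an arbitrary half-space orientation. Two technical care-points: one must verify that the boundary conditions \eqref{system4'}, after the linear recombination of components, genuinely decouple into homogeneous Dirichlet and Neumann problems for the recombined scalar functions (so that the scalar spectral result applies), and one must confirm that the recombined functions indeed lie in the form domain of the respective scalar Laplacian — i.e., that membership of $u$ in $\Dom(T)\subset H^1(\Omega;\mathbb{C}^4)$ transfers to the right Sobolev/boundary regularity after recombination. Both are straightforward given the regularity Theorem~\ref{Regularity hs} (which already shows $u\in C^\infty(\overline\Omega)$), but they are the steps that need to be written out rather than waved through.
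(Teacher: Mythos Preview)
Your proposal is correct and follows essentially the same strategy as the paper: contradiction, reduction to the Helmholtz equation via $T^2$, absence of $L^2$ eigenfunctions for the half-space Laplacian, and then Lemma~\ref{Lemma-self} (Weyl) for the identity $\sigma_{\textup{ess}}^\Omega(D)=\sigma^\Omega(T)$.

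The one genuine difference is in how the scalar step is executed. The paper works only with the Dirichlet combination $X:=u_1-iu_3$, uses the regularity Theorem~\ref{Regularity hs} to get $X\in C^\infty(\overline\Omega)$, extends $X$ by odd reflection to all of $\mathbb{R}^3$, and then invokes the absence of nonzero $L^2$ solutions of $(-\Delta-\mu)\tilde X=0$ on $\mathbb{R}^3$. You instead stay on the half-space and argue by partial Fourier transform in the tangential variables, reducing to a one-dimensional ODE with no $L^2$ solution; this is equally valid and avoids the reflection step. Your flagged care-point about the recombination is in fact correct and worth making explicit: from \eqref{system4'} the combinations $u_1-iu_3$ and $u_2+iu_4$ satisfy homogeneous Dirichlet conditions, and a short computation using the first-order system \eqref{system3} (e.g.\ $\partial_3(u_1+iu_3)=-(\partial_1-i\partial_2)(u_2+iu_4)-(\lambda-m)(u_1-iu_3)$, which vanishes on $\partial\Omega$) shows that $u_1+iu_3$ and $u_2-iu_4$ satisfy homogeneous Neumann conditions. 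Applying your ODE argument to all four recombinations then forces $u\equiv0$. The paper's written proof treats only the single Dirichlet combination and is somewhat terse about why $X\equiv0$ alone yields $u\equiv0$, so your version is, if anything, more complete on this point.
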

 \begin{proof}
If we suppose that there exists an eigenfunction $u \in H^1(\Omega)$ of $(T, \Dom(T))$ then it follows  $ u \in C^\infty(\overline{\Omega})$ due to Theorem \ref{Regularity hs}. Without of loss generality, we can assume $\Omega = \mathbb{R}^2\times (0,\infty)$.
 As a result, we deduce that $u_1 - i u_3 := X$ is a smooth function on $\Omega$, vanishes on the boundary $\partial\Omega$ and satisfy \\
 \begin{equation}\label{E1} 
\left\{
\begin{aligned}
  \Delta X + (\lambda^2 - m^2) X &= 0 
  && \mbox{in} && \Omega 
  \,,
  \\
  X &= 0 
  && \mbox{on} && \mathbb{R}^2\times \{0\}
  \,.
\end{aligned}
\right.
\end{equation}
 By dint of $X \in C^\infty(\overline{\Omega})$, we can extend $X$ to a smooth function $\tilde{X} \in C^\infty(\mathbb{R}^3)$ such that 
 $$\tilde{X}(x,y,z) = \begin{cases}
 & X(x,y,z) \, \quad \mbox{if} \quad (x,y,z) \in \Omega \, ,\\
 & X(x,y,-z) \, \quad \mbox{if}\quad (x,y,z) \in \mathbb{R}^2\setminus\Omega \, .
 \end{cases}
 $$
 Therefore, $ 0 \neq \tilde{X} \in L^2(\mathbb{R}^3)\cap C^\infty(\mathbb{R}^3)$ and satisfies the following equation
\begin{equation}\label{equa1}
\Delta \tilde{X} + (\lambda^2 - m^2) \tilde{X} = 0  \quad \mbox{in} \quad \Real^3 \,.
\end{equation}
It contrasts with the fact that there is no eigenfunction of the Laplacian in $\Real^3$ by means of the Fourier transform. Therefore, there is no point-spectrum or eigenfunction of $(T, \Dom(T))$ on any half-space or $\sigma_{\textup{ess}}^{\Omega}(T)=\sigma^{\Omega}(T)$.

Due to Lemma \ref{Lemma-self}, one has $\sigma_{\textup{ess}}^{\Omega}(D)=\sigma_{\textup{ess}}^{\Omega}(T)$. Consequently, $\sigma_{\textup{ess}}^{\Omega}(D)=\sigma^{\Omega}(T)$. It concludes the proof.


 \end{proof}
 For smooth-bounded domains, it seems hard to get the desired result if one only focuses on the Dirichlet regularity. Nevertheless, with reasonable manipulations and arguments, we can achieve the result by exploiting the Neumann and Dirichlet regularities, along with the trace extension and induction arguments. 
 
 \begin{Theorem}\label{Main}
 Let $\Omega$ be a $C^k$ smooth bounded domain in $\mathbb{R}^3$, $ k \in \mathbb{Z}, k\geq 3$  and $u$ is an eigenfunction corresponding to an eigenvalue $\lambda$ of $(T, \Dom(T))$  then $u \in H^{k-1}(\Omega)$.
 
 \end{Theorem}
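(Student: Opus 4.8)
The plan is to follow the pattern of Theorems~\ref{Th1} and~\ref{Regularity hs}: split the eigenfunction into a ``bad'' part that carries the MIT boundary condition and is killed at $\partial\Omega$ by a smooth extension of the boundary projection (treated by Dirichlet regularity), and a remaining part recovered through the first-order Dirac system together with Neumann regularity; then bootstrap. \textbf{Step 1 (the part with homogeneous Dirichlet data).} Since $\partial\Omega$ is $C^k$, the unit normal $n$ lies in $C^{k-1}(\partial\Omega;\Real^3)$; extend it to $\tilde n\in C^{k-1}(\overline\Omega;\Real^3)$, taken to equal the unit normal field in a fixed tubular neighbourhood $\mathcal U$ of $\partial\Omega$. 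Put $\tilde B:=-i\beta(\alpha\cdot\tilde n)$ and $\tilde P_\pm:=\tfrac12(I_4\pm\tilde B)\in C^{k-1}(\overline\Omega;\C^{4\times4})$, so that on $\partial\Omega$ the MIT condition in~\eqref{def_MIT_op} reads exactly $\tilde P_-u|_{\partial\Omega}=0$. Set $\phi:=\tilde P_-u\in H^1(\Omega)$; then $\phi|_{\partial\Omega}=0$, and using $\Delta u=-(\lambda^2-m^2)u$ and the Leibniz rule,
\[
  -\Delta\phi+\phi=(1+\lambda^2-m^2)\phi-(\Delta\tilde P_-)u-2\,\nabla\tilde P_-\!\cdot\nabla u\in L^2(\Omega),
\]
because $k\ge3$ gives $\tilde P_-\in C^2$, hence $\Delta\tilde P_-,\nabla\tilde P_-\in L^\infty$. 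By the Dirichlet regularity of Theorem~\ref{D2}, $\phi\in H^2(\Omega)$, and therefore $\tilde P_-\nabla u=\nabla\phi-(\nabla\tilde P_-)u\in H^1(\Omega)$.

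\textbf{Step 2 (the remaining part, via the Dirac system and Neumann regularity).} Work in $\mathcal U$, where $|\tilde n|\equiv1$. Two algebraic facts are used. First, $\tilde B(\alpha\cdot\tilde n)=-i\beta=-(\alpha\cdot\tilde n)\tilde B$, whence $(\alpha\cdot\tilde n)\tilde P_+=\tilde P_-(\alpha\cdot\tilde n)$, $\tilde P_-\beta=\beta\tilde P_+$, and $(\alpha\cdot\tilde n)^{-1}=\alpha\cdot\tilde n$ on $\mathcal U$. Second, from $\alpha_j\beta=-\beta\alpha_j$ and $(\alpha\cdot\tilde n)\alpha_j+\alpha_j(\alpha\cdot\tilde n)=2\tilde n_jI_4$ one obtains $[\tilde P_-,\alpha_j]=i\,\tilde n_j\beta$. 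Split $\nabla=\tilde n\,\partial_\nu+\nabla_\tau$ with $\partial_\nu:=\tilde n\cdot\nabla$ and $\nabla_\tau:=\nabla-\tilde n\,\partial_\nu$; the eigenvalue equation $\alpha\cdot\nabla u=i(\lambda-m\beta)u$ then gives, after applying $\tilde P_-$ and using the first identity,
\[
  (\alpha\cdot\tilde n)\,\tilde P_+\partial_\nu u=i\,\tilde P_-(\lambda-m\beta)u-\tilde P_-(\alpha\cdot\nabla_\tau)u .
\]
Here $\tilde P_-(\lambda-m\beta)u=\lambda\phi-m\beta(u-\phi)\in H^1$. For the tangential term, the commutator identity yields $\tilde P_-(\alpha\cdot\nabla_\tau)u=\sum_j\alpha_j\,\tilde P_-(\nabla_\tau u)_j+i\beta\,(\tilde n\cdot\nabla_\tau u)$, and $\tilde n\cdot\nabla_\tau u=0$; since $(\nabla_\tau u)_j=\partial_ju-\tilde n_j\partial_\nu u$ and $\tilde P_-\nabla u\in H^1$ from Step~1, each $\tilde P_-(\nabla_\tau u)_j\in H^1(\mathcal U)$, so the whole right-hand side is in $H^1(\mathcal U)$. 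Multiplying by $\alpha\cdot\tilde n\in C^{k-1}$ gives $\tilde P_+\partial_\nu u\in H^1(\mathcal U)$, and together with $\tilde P_-\partial_\nu u\in H^1$ this yields $\partial_\nu u\in H^1(\mathcal U)$; by the trace theorem $\partial u/\partial n\in H^{1/2}(\partial\Omega)$. As $-\Delta u+Cu=(C+\lambda^2-m^2)u\in L^2(\Omega)$ for $C>0$ large enough, the Neumann regularity for bounded $C^2$ domains (\cite{Grisvard}, cf.~\eqref{Neumann}) forces $u\in H^2(\Omega)$.

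\textbf{Step 3 (bootstrap).} Suppose $u\in H^m(\Omega)$ with $m\ge1$ and $k\ge m+2$. Redoing Step~1 with $\nabla u\in H^{m-1}$ and $\tilde P_-\in C^{m+1}$ (available since $k-1\ge m+1$) gives $\phi\in H^{m+1}(\Omega)$ and $\tilde P_-\nabla u\in H^m(\Omega)$; redoing Step~2 gives $\partial u/\partial n\in H^{m-1/2}(\partial\Omega)$; since $-\Delta u+Cu\in H^m(\Omega)$ and $\Omega$ is $C^{m+1}$, Neumann regularity upgrades $u$ to $H^{m+1}(\Omega)$. Starting from $u\in H^1$ (Step~2), the induction runs exactly as long as $k\ge m+2$; the last admissible step is $m=k-2\mapsto m+1=k-1$, so $u\in H^{k-1}(\Omega)$, as claimed.

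\textbf{Expected main obstacle.} The crux is Step~2. Restricting the Dirac equation to $\partial\Omega$ and reading off $\tilde P_+\partial_\nu u$ directly is circular, because on the boundary the equation only constrains the ``good'' component $\tilde P_-\partial_\nu u$; one must instead apply $\tilde P_-$ to the equation in the interior of $\mathcal U$ and keep the tangential term, whose dangerous uncontrolled part is exactly the one annihilated by the identity $[\tilde P_-,\alpha_j]=i\tilde n_j\beta$ once contracted with $\nabla_\tau$. The other slightly delicate point is the bookkeeping of the two derivative losses (from $\partial\Omega$ to $\tilde n$, and in the elliptic estimates), which is what makes the iteration terminate precisely at $H^{k-1}$; an alternative, more laborious route would be to localise, flatten the boundary by a $C^k$ chart, and re-run the half-space argument of Theorem~\ref{Regularity hs} with the resulting $C^{k-1}$ variable coefficients.
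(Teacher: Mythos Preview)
Your proof is correct and follows essentially the same strategy as the paper's: extend the normal to $\overline\Omega$, use Dirichlet regularity on the combination that vanishes on $\partial\Omega$, feed the result back through the first-order Dirac system to control the normal derivative, invoke Neumann regularity, and bootstrap. The only real difference is packaging: where the paper writes out all four scalar components and manipulates the explicit linear combinations $u_1-gu_4-fu_3$, etc., you work invariantly with the boundary projection $\tilde P_-=\tfrac12(I_4+i\beta(\alpha\cdot\tilde n))$ and the commutator identity $[\tilde P_-,\alpha_j]=i\tilde n_j\beta$, which makes the tangential cancellation in Step~2 transparent rather than a line-by-line computation. (Minor slip: the eigenvalue equation gives $\alpha\cdot\nabla u=-i(\lambda-m\beta)u$, not $+i$; this sign is irrelevant to the argument.)
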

 \begin{proof}
Since $\Omega$ is of class $C^k$ then $n $ is of class $C^{k-1}(\partial\Omega)$. Therefore, there exists functions $f,g \in C^{k-1}(\overline{\Omega})$ such that 
  $ f := -in_3, g := -in_1 -n_2$ on $\partial\Omega$ then $ \overline{g} = in_1 -n_2$ on $\partial\Omega$.
 
If  $ u$ is an eigenfunction of $(T, \Dom(T))$ then $u$ satisfies systems \eqref{system3} and \eqref{system4} and thus, we have  
\begin{equation}\label{system40} 
\left\{
\begin{aligned}
g u_4 +f u_3 - u_1 &=  0 \quad && \mbox{in} \quad \partial\Omega \,,\\
-\overline{g} u_3 -f u_4 - u_2 &= 0 \quad && \mbox{in} \quad \partial\Omega \,,\\
-g u_2 -f u_1 - u_3 &=  0 \quad && \mbox{in} \quad \partial\Omega \,,\\
\overline{g} u_1 +f  u_2- u_4 &= 0 \quad && \mbox{in} \quad \partial\Omega \,.\\
\end{aligned}  
\right.
\end{equation}
In addition, if $h \in C^2(\overline{\Omega})$ then $$ \Delta(h u_i) = \Delta h u_i + u_i \Delta u_i + 2 \nabla h \nabla u_i \in L^2(\Omega),$$ for every $i = 1, 2, 3$.
 Employing Theorem \ref{D2} and $\Delta u_i = -(\lambda^2 - m^2) u_i$, $\forall i = 1, 2, 3$, we deduce that 
 \begin{equation}\label{system4*} 
\left\{
\begin{aligned}
u_1 - g u_4 - f u_3   && \mbox{in} \quad H^2(\Omega) \,,\\
u_2 +\overline{g} u_3 + f u_4  && \mbox{in} \quad H^2(\Omega) \,,\\
u_3 +g u_2 + f u_1  && \mbox{in} \quad H^2(\Omega) \,,\\
u_4 -\overline{g} u_1 - f  u_2  && \mbox{in} \quad H^2(\Omega) \,.\\
\end{aligned}  
\right.
\end{equation}
%
  It yields $$  (\partial_1 + i \partial_2) u_1 - f (\partial_1 + i \partial_2) u_3 - g (\partial_1 + i \partial_2) u_4 \in H^1(\Omega).$$ 
 Taking \eqref{system3} into account, we have 
 $$(\partial_1 + i \partial_2) u_1 = \partial_3 u_2 + i(\lambda+m) u_4,$$
 $$(\partial_1 + i \partial_2) u_3 = \partial_3 u_4 + i(\lambda-m) u_2,$$
 From \eqref{system4*}, we obtain 
 $$ \partial_3 u_2 + \overline{g} \partial_3 u_3 + f \partial_3 u_4 \in H^1(\Omega) .$$
Hence, we deduce that
$$ - \overline{g}  \partial_3 u_3 - f \partial_3 u_4 - f \partial_3 u_4 - g (\partial_1 + i \partial_2) u_4 \in H^1(\Omega) .$$
Moreover, we derive $ \partial_3 u_3$ from system \eqref{system3}, we have 
 $$ \partial_3 u_3 = -(\partial_1 - i \partial_2) u_4 + i(\lambda - m) u_1 . $$
 It yields
 \begin{equation}
 \overline{g} (\partial_1 - i \partial_2) u_4 -2 f \partial_3 u_4 - g (\partial_1 + i \partial_2) u_4 \in H^1(\Omega) .
 \end{equation}
 Applying the trace theorem, we obtain
 \begin{equation}
 (i n_1 - n_2) (\partial_1 - i \partial_2) u_4 + 2 i n_3 u_4 + (i n_1 + n_2) (\partial_1 + i \partial_2) u_4 \in H^\frac{1}{2}(\partial\Omega) .
 \end{equation}
 Equivalently, we obtain that
 \begin{equation}
( n_1 \partial_1 + n_2 \partial_2 + n_3 \partial_3) u_4 \in H^\frac{1}{2}(\partial\Omega) .
 \end{equation}
 It implies that $\frac{\partial u_4}{\partial n} \in H^\frac{1}{2}(\partial\Omega).$
As a result, we deduce that 
 \begin{equation} 
\left\{
\begin{aligned}
  -\Delta u_4 &= (\lambda^2 -m^2) u_4 
  && \mbox{in} && \Omega 
  \,,
  \\
  \frac{\partial u_4}{\partial n} & \in H^\frac{1}{2}(\partial\Omega) 
  \,.
\end{aligned}
\right.
\end{equation}
 It implies that $ u_4 \in H^2(\Omega)$ due to the elliptic regularity for Neumann boundary  described in system \eqref{Neumann}.
 Using the analogous arguments for $u_1, u_2$ and $u_3$, we obtain $u_i \in H^2(\Omega)$ for every $i = 1, 2, 3.$
 
 Now we use the induction to prove the desired result, we suppose it holds for $ j < k-1$, it means that $ u_i \in H^j(\Omega)$ $\forall \, j < k-1$. We will show that $ u_i \in H^{j+1}(\Omega)$ $\forall \, j < k-1$. 
 
 Indeed, by dint of the inductive hypothesis, we deduce that
 if $h \in  C^{k-1}(\overline{\Omega})$ then $$ \Delta(h u_i) = \Delta h u_i + u_i \Delta u_i + 2 \nabla h \nabla u_i \in H^{j-1}(\Omega),$$ for every $i = 1, 2, 3$.
 Employing Theorem \ref{D2}, we obtain that
 \begin{equation}\label{system4**} 
\left\{
\begin{aligned}
u_1 - g u_4 - f u_3   && \mbox{in} \quad H^{j+1}(\Omega) \,,\\
u_2 +\overline{g} u_3 + f u_4  && \mbox{in} \quad H^{j+1}(\Omega) \,,\\
u_3 +g u_2 + f u_1  && \mbox{in} \quad H^{j+1}(\Omega) \,,\\
u_4 -\overline{g} u_1 - f  u_2  && \mbox{in} \quad H^{j+1}(\Omega) \,.\\
\end{aligned}  
\right.
\end{equation}
%
 We recall the notation $D^k$ as in the proof of Lemma \ref{Regularity hs}. It follows that $$  (\partial_1 + i \partial_2) D^{j-1}u_1 - f (\partial_1 + i \partial_2) D^{j-1}u_3 - g (\partial_1 + i \partial_2) D^{j-1}u_4 \in H^{1}(\Omega) \,.$$
 Differentiating both sides of system \eqref{system3}, one has
\begin{equation}\label{system3**} 
\left\{
\begin{aligned}
(\partial_1-i\partial_2) D^{j-1}u_4 + \partial_3 D^{j-1} u_3 &= i(\lambda -m) D^{j-1}u_1 \quad && \mbox{in} \quad H^1(\Omega) \,,\\
(\partial_1+i\partial_2) D^{j-1}u_3 - \partial_3 D^{j-1} u_4 &= i(\lambda -m) D^{j-1}u_2 \quad && \mbox{in} \quad H^1(\Omega) \,,\\
(\partial_1-i\partial_2) D^{j-1}u_2 + \partial_3 D^{j-1}u_1 &= i(\lambda +m) D^{j-1}u_3 \quad && \mbox{in} \quad H^1(\Omega) \,,\\
(\partial_1+i\partial_2) D^{j-1}u_1 - \partial_3 D^{j-1}u_2 &= i(\lambda +m) D^{j-1}u_4 \quad && \mbox{in} \quad H^1(\Omega) \,,\\
\end{aligned}  
\right.
\end{equation}
Repeat the previous arguments, we obtain
\begin{equation}
( n_1 \partial_1 + n_2 \partial_2 + n_3 \partial_3) D^{j-1}u_4 \in H^{1}(\Omega) .
 \end{equation}
Therefore, $ \frac{\partial D^{j-1}u_4}{\partial n} \in H^{\frac{1}{2}}(\partial\Omega)$ due to the trace theorem. It deduces  that
 \begin{equation} 
\left\{
\begin{aligned}
  -\Delta D^{j-1}u_4 &= ((\lambda^2 -m^2) D^{j-1}u_4  
  && \mbox{in} && H^1(\Omega) 
  \,,
  \\
  \frac{\partial D^{j-1}u_4}{\partial n} & \in H^{\frac{1}{2}}(\partial\Omega) 
  \,.
\end{aligned}
\right.
\end{equation}
Hence, $ D^{j-1} u_4 \in H^2(\Omega)$ due to the Neumann elliptic regularity. As a result, $u_4 \in H^{j+1}(\Omega)$.

Using the analogous arguments for $u_1, u_2, u_3$, we have  $u_i \in H^{k-1}(\Omega)$  for every $i = 1, 2, 3$ by induction on $j$. This completes the proof of the theorem.
\end{proof}
\begin{Corollary}
Let $\Omega$ be a $C^\infty$ smooth bounded domain in $\mathbb{R}^3$ then all eigenfunctions of $(T, \Dom(T))$ are in $C^\infty(\overline{\Omega})$.
\end{Corollary}
\begin{proof}
The result is deduced from Theorem \ref{Main} and the Sobolev embedding $\cap_{k \in \mathbb{N}} H^k(\Omega) = C^\infty(\overline{\Omega})$.
\end{proof}

\begin{Proposition}
Let $\Omega$ be a $C^k$-smooth bounded  domain with $k \geq 3$  then the spectrum of $(D, \Dom(D))$ is purely discrete and all eigenfunctions of $(D, \Dom(D))$ are in $ H^{k-1}(\Omega)$.

\end{Proposition}

\begin{proof}
By dint of Lemma \ref{Lemma-self}, $(D, \Dom(D))$ is self-adjoint and $\sigma_{\textup{ess}}^{\Omega}(D)= \sigma_{\textup{ess}}^{\Omega}(T)$. Moreover, the spectrum of $(T,\Dom(T)$ is purely discrete. Therefore, the spectrum of $(D,\Dom(D))$ is also purely discrete.

The regularity of eigenfunctions of $(D,\Dom(D))$ can be deduced by employing the analogous arguments as in the proof of Theorem \ref{Main}.
\end{proof}

\begin{Remark}
In the case of unbounded smooth domains in $\mathbb{R}^3$ with bounded boundaries, the spectrum of $(T, \Dom(T))$ is purely essential \cite{J.B} and we have not provided any regularity result for the eigenfunctions of the Dirac operator in such domains. It differs from that achieved in two dimensions. The difference arises from the necessity of incorporating the inhomogenous Neumann regularity within the proof of the eigenfunction regularity in three dimensions. 
\end{Remark}
 
Regarding the infinite-mass  Dirac operators employed on three-dimensional polygonal boxes, the self-adjointness is still an open problem even for the case of cubes, despite the validity of the regularity result in $H^\frac{1}{2}$-setting \cite{Behrndt}.

 
\subsection*{Declaration of competing interest
}
No  conflict of interest was reported by the author.

\subsection*{Acknowledgment} 
The author was partially supported by the EXPRO grant No.~20-17749X
of the Czech Science Foundation.

%
%



\providecommand{\bysame}{\leavevmode\hbox to3em{\hrulefill}\thinspace}
\providecommand{\MR}{\relax\ifhmode\unskip\space\fi MR }
\providecommand{\MRhref}[2]{%
  \href{http://www.ams.org/mathscinet-getitem?mr=#1}{#2}
}
\providecommand{\href}[2]{#2}

\end{document}